\newtheorem{teo}{Theorem}[section]
\newtheorem{lemma}{Lemma}[section]
\newtheorem{rem}{Remark}[section]
\newtheorem{prop}{Proposition}[section]
\newtheorem{cor}{Corollary}[section]
\newtheorem{defin}{Definition}[section]
\newcommand{\Z}{{\mathbb{Z}}}
\newcommand{\C}{{\mathbb{C}}}
\newcommand{\R}{{\mathbb{R}}}
\newcommand{\pp}{{\mathbb{P}}}
\newcommand{\G}{{\mathcal{G}}}
\newcommand{\A}{{\mathcal{A}}}
\newcommand{\emme}{{\mathcal M}}
\newcommand{\elle}{{\mathcal L}}
\title{Cohomology rings of compactifications of toric arrangements}
\author{Corrado De Concini, Giovanni Gaiffi
%\footnote{ Dipartimento di Matematica, Universit\`a di Pisa, 
%Largo Bruno Pontecorvo, 5, 56127, Pisa, \texttt{gaiffi@dm.unipi.it}}
}
\date{\today}
\begin{document}
\begin{abstract}
Some projective  wonderful models for  the complement of  a toric arrangement in a $n$-dimensional algebraic torus $T$ 
 were constructed in \cite{DCG}. 
In this paper  we describe their integer cohomology rings by generators and relations.
\end{abstract}
\maketitle

\section{Introduction}

Let \(T\) be   a $n$-dimensional algebraic torus $T$ over the complex numbers, and  let $X^*(T)$ be its  character group, which is a lattice of rank \(n\).

A {\em layer} in \(T\) is the subvariety 
$$\mathcal K_{\Gamma,\phi}=\{t\in T|\, \chi(t)=\phi(\chi),\, \forall \chi\in \Gamma\}$$
where $\Gamma$ is a split direct summand of  $X^*(T)$ and  $\phi:\Gamma \to \mathbb C^*$ is a homomorphism.

A  toric arrangement \(\A\) is given by   finite set of  layers \(\A=\{\mathcal K_{1},...,\mathcal K_{m}\}\) in $T$; if for every \(i=1,...,m\) the layer \(\mathcal K_i\)  has codimension 1
the arrangement  \(\A\) is called  {\em divisorial}.

In \cite{DCG}  it is shown   how to construct  {\em projective wonderful models} for  the complement \(\emme(\A)=T-\bigcup_i \mathcal K_i\).  A projective wonderful model    is a smooth projective  variety \ containing \(\emme(\A)\) as an open set and such that the complement of  \(\emme(\A)\) is  a divisor with normal crossings and smooth irreducible components. We recall that the problem of finding a  wonderful model for \(\emme(\A)\) was first studied by Moci in \cite{mociwonderful}, where a construction of  non projective  models was described.

In this paper we compute  the integer cohomology ring  of the projective wonderful models  by giving an explicit description of their  generators and relations. 
This allows for an extension to the setting of toric arrangements of a  rich theory that regards models of subspace arrangements and was originated in  \cite{DCP2}, \cite{DCP1}. 
In these papers    De Concini and Procesi constructed   wonderful  models  for the   complement of a subspace arrangement, providing both a projective and a non projective version of their construction.  In \cite{DCP1} they showed, using a description of the cohomology rings of the projective  wonderful models  to give an explicit presentation of  a Morgan algebra, that the mixed Hodge numbers and the rational homotopy type of the complement of a complex  subspace arrangement  depend only on the intersection lattice (viewed as a ranked poset). 
The cohomology rings   of the models of subspace arrangements were then   studied  in  \cite{yuzBasi}, \cite{GaiffiBlowups}, were some integer bases were provided,  and also, in the  real case,  in \cite{etihenkamrai}, \cite{rains}.   Some  combinatorial objects (nested sets, building sets) turned out to be relevant in the description of the boundary of the models and of their cohomology rings: their relation with discrete geometry were pointed out   in \cite{feichtner},  \cite{gaiffipermutonestoedra};
 the case of complex reflection groups was dealt with  in \cite{hendersonwreath} from the representation theoretic point of view and in \cite{callegarogaiffilochak} from the homotopical point of view. 
 
 The connections between   the geometry of these models and   the Chow rings of matroids were  pointed out first in \cite{feichtneryuz} and  then in  \cite{adiprasitokatzhuh}, where they  also played a crucial role in the study of   some  log-concavity problems.

As it happens for the case of subspace arrangements, in addition to the interest in  their  own  geometry,  the projective wonderful models of a toric arrangement \(\A\)  may also  spread a new light on the geometric properties of the complement \(\emme(\A)\). 
For instance, in the divisorial  case, using the properties of a projective wonderful model,  Denham and Suciu showed in \cite{DS} that   \(\emme(\A)\) is both a duality space and an abelian duality space.

Let us now describe more in detail the content of the present  paper.

In Section \ref{sec:wonderfulmodels} we  briefly recall   the construction of wonderful models of varieties   equipped with an  {\em arrangement of subvarieties}: this is a generalization, studied  by  Li  in \cite{li},  of the  De Concini and Procesi's construction for subspace arrangements. Its relevance in our setting is explained by the following remark.  In \cite{DCG}, as a first step, the torus \(T\) is embedded in a  smooth projective toric variety \(X\). This toric variety, as we recall in Section \ref{pipo},  is chosen in such a way that the set made  by  the connected components of the intersections of the closures of the layers of  \(\A\) turns out to be  an arrangement of subvarieties \(\mathcal L'\)  and one can apply Li's construction in order to get a projective wonderful model.

More precisely, there are many possible projective wonderful models associated to \(\mathcal L'\), depending on the choice of a {\em building set} for \(\mathcal L'\).

We devote  
 Section \ref{generalpropertiesbuilding} to a  recall of the definition and the main properties of  building sets and  nested sets of arrangements of subvarieties.  These combinatorial objects were introduced by De Concini and Procesi in \cite{DCP2} and  their properties in the case of arrangements of subvarieties were investigated in \cite{li}.  If \(\G\) is a building set for  \(\mathcal L'\) we will denote by \(Y(X,\G)\) the  wonderful model constructed starting from \(\G\).
  
In Section \ref{wellconnectedbuilding}, given any arrangement of subvarieties in a variety \(X\),  we focus on   its  {\em well connected building sets}: these are building sets  that satisfy an additional property,  that will be crucial for our cohomological computations.

In Section \ref{Chern1}  
 we recall a key lemma, due to Keel, that allows to compute  the cohomology ring of the  blowup of a variety \(M\) along a center \(Z\) provided that the restriction map \(H^*(M)\rightarrow H^*(Z)\) is surjective.  In this result the Chern polynomial of the normal bundle of \(Z\) in \(M\) plays a crucial role.  Then we  go back to the case of toric arrangements and, given a smooth projective toric variety \(X\) associated to the toric arrangement \(\A\), we  describe the properties of some polynomials in \(H^*(X,\Z)\) that are related to the Chern polynomials of the closures of the layers of \(\A\) in \(X\).

In  Section \ref{seccohomology} we prove our  main result (Theorem \ref{teopresentazionecoomologia}):  we provide a presentation of the cohomology ring  \(H^*(Y(X,\G),\Z)\) by generators and relations, as a quotient of a polynomial ring over \(H^*(X,\Z)\), whose  presentation is well known.  
A concrete choice for the generators that appear in our theorem is provided in Section \ref{subsecsceltapolinomi}.
We recall  that a description   of the cohomology of a wonderful model of subvarieties as a module  was already found by Li in    \cite{li2}.

Finally, in Section \ref{seccohomologystrata} we  provide a presentation of the cohomology rings of all the strata  in the boundary of  \(Y(X,\G)\).

\section{Wonderful models of stratified varieties}
\label{sec:wonderfulmodels}

In this section we are going to recall  the definitions of arrangements of subvarieties, building sets and nested sets   given in Li's paper  \cite{li}.
We will give these definitions in two steps, first for simple arrangements of subvarieties, then in a more general situation. 
We are going to work over the complex numbers, hence all the algebraic varieties we are going to consider are complex algebraic varieties.
\begin{defin}
\label{def:simple} Let $X$ be a non singular variety. A {\em simple  arrangement of subvarieties} of \(X\) is a finite set \(\Lambda = \{\Lambda_i\}\) of nonsingular closed connected subvarieties \(\Lambda_i\), properly contained in \(X\), which satisfy the following conditions:\\
(i)  \(\Lambda_i\) and \(\Lambda_j\)  intersect  {\em cleanly},  i.e. their intersection  is nonsingular and for every \(y\in  \Lambda_i\cap \Lambda_j\) we have 
\[T_{\Lambda_i\cap \Lambda_j,y} = T_{\Lambda_i,y}\cap T_{\Lambda_j,y}\]
(ii) \(\Lambda_i \cap \Lambda_j\)  either   belongs to  \(\Lambda\)  or is empty.

\end{defin}

\begin{defin}
Let \(\Lambda\) be a simple  arrangement of subvarieties of  \(X\).  A subset \(\G \subseteq \Lambda\) is called a {\em building set} for \(\Lambda\)  if for every \(\Lambda_i \in \Lambda - \G\)  the minimal elements in \(\{G  \in \G \: : \: G \supseteq \Lambda_i\}\)  intersect transversally and their intersection is \(\Lambda_i\). These minimal elements are called the {\em \(\G\)-factors} of \(\Lambda_i\).

\end{defin}

\begin{defin}
Let \(\G\) be  a building set for a simple arrangement \(\Lambda\).
A non empty subset \({\mathcal T}\subseteq \G\) is called \(\G\)-nested if for any subset  \(\{A_1, ...,A_k \}\subset {\mathcal T}\) (with \(k>1\)) of pairwise non comparable  elements, $A_1,\ldots ,A_k$ are the \(\G\)-factors of an element in \(\Lambda\). 

\end{defin}

We remark that in Section 5.4 of \cite{li} the following    more general definitions are provided, to include the case when the intersection of two strata is a disjoint union of strata. 

\begin{defin}
\label{def:nonsimple} An {\em arrangement of subvarieties} of a nonsingular variety \(X\) is a finite set \(\Lambda = \{\Lambda_i\}\) of nonsingular closed connected subvarieties \(\Lambda_i\), properly contained in \(X\), that satisfy the following conditions:\\
(i)  \(\Lambda_i\) and \(\Lambda_j\)  intersect  cleanly;\\
(ii) \(\Lambda_i \cap \Lambda_j\)    is either  equal to the disjoint union of some  of the  \(\Lambda_k\)'s  or it is empty.

\end{defin}
Given an open set $U\subset X$, and a family   $\Lambda$ of subvarieties of $X$, by the restriction $\Lambda_{|U}$ of $\Lambda$ to $U$ we shall mean the family of non empty intersections of elements of $\Lambda$ with $U$.
\begin{defin}
\label{defbuildinggenerale}
Let \(\Lambda\) be an arrangement of subvarieties of  \(X\).  A subset \(\G \subseteq \Lambda\) is called a building set for \(\Lambda\)  if there is an open cover \(\{U_i\}\) of \(X\) such that:\\
a) the restriction of the arrangement \(\Lambda\) to \(U_i\) is simple for every \(i\);\\
b) \( \G_{|U_i}\)  is a building set for \(\Lambda_{|U_i}\).

\end{defin}
We have first introduced the notion of arrangement of subvarieties and then defined  a building set for the arrangement. However it is often convenient to go in the opposite direction and first introduce the notion of building set and use it to define the corresponding arrangement.

\begin{defin}
A finite set \(\G\) of connected subvarieties of \(X\) is called a building set if the set of the connected components of all the possible intersections of collections of subvarieties from \(\G\) is an arrangement of subvarieties \(\Lambda\) (the arrangement {\em induced} by \(\G\))  and \(\G\) is a building set for \(\Lambda\).
\end{defin}
Let us now introduce the notion of $\mathcal G$-nested set in the  more general context of (not necessarily simple) arrangements of subvarieties.
\begin{defin}
Let \(\G\) be  a building set for an arrangement \(\Lambda\).
A subset \({\mathcal T}\subseteq \G\) is called \(\G\)-nested if there is an open cover \(\{U_i\}\) of \(X\) such that, for every \(i\),  \(\G_{|U_i}\)  is simple and 
\({\mathcal T}_{|U_i}\) is  \(\G_{|U_i}\)-nested.
\end{defin}
\begin{rem}
\label{reminsiemevuoto}
We notice that, according to the definition above,  if some  varieties   \(G_1,G_2,..,G_k\in \G\) have empty intersection, then they cannot belong to the same \(\G\)-nested set.
\end{rem}

Once we  have an arrangement  \(\Lambda\) of a nonsingular variety \(X\) and a  building set \(\G\) for \(\Lambda\), we can construct a wonderful model \(Y(X,\G)\)   by  considering  (by  analogy with \cite{DCP1})  the closure of the image of the  locally closed embedding
\[\left( X-\bigcup_{\Lambda_i\in \Lambda}\Lambda_i \right ) \rightarrow \prod_{G\in \G}Bl_GX\] where \(Bl_GX\) is the blowup of \(X\) along \(G\).

In \cite{li}, Proposition 2.8, one shows:
\begin{prop} Let    \(\G\) be a building set in the variety $X$. Let $F\in \G$ be a minimal element in $\G$ under inclusion. Then the set $\G'$ consisting of the proper transforms of the elements in $\G$ is a building set in $Bl_FX$.
\end{prop}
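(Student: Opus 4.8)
The plan is to reduce the statement to a local computation with arrangements of coordinate subspaces, where it becomes the content of the corresponding result of De Concini and Procesi in \cite{DCP1}. Write $\pi:Bl_FX\to X$ for the blowup map, $E=\pi^{-1}(F)$ for the exceptional divisor, and, for $G\in\G$, let $\widetilde G$ denote the proper transform of $G$, with the convention $\widetilde F:=E$. Since $F$ is minimal in $\G$, no element of $\G$ is properly contained in $F$; hence for $G\neq F$ the locus $F\cap G$ is a proper (possibly empty) nonsingular subvariety of $G$, and $\widetilde G$ is its blowup $Bl_{F\cap G}G$ embedded in $Bl_FX$ (so that $\widetilde G\cong G$ when $F\cap G=\emptyset$). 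The first move is to use the locality of the notion of building set: by Definition \ref{defbuildinggenerale}, applied to the arrangement $\Lambda$ induced by $\G$, there is an open cover $\{U_i\}$ of $X$ with $\Lambda_{|U_i}$ simple and $\G_{|U_i}$ a building set for it, and we may refine it so that, in addition, $F\cap U_i$ is connected (or empty) for each $i$. Since the formation of a blowup commutes with restriction to open sets, $\pi^{-1}(U_i)$ is identified with $Bl_{F\cap U_i}U_i$ and $\widetilde G\cap\pi^{-1}(U_i)$ is the proper transform of $G\cap U_i$; as restrictions of building sets to smaller opens are again building sets, we are reduced to the case in which $\Lambda$ is \emph{simple} and $F$ is a connected minimal element, and we must then exhibit an open cover of $Bl_FX$ verifying conditions a) and b) of Definition \ref{defbuildinggenerale} for $\G'$.

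For this I would linearize near an arbitrary point $y\in Bl_FX$, setting $p=\pi(y)$. The clean intersection hypothesis together with the building set structure allows one to choose local analytic coordinates around $p$ in which $F$ and every element of $\G$ through $p$ are coordinate subspaces — the standard fact that a clean arrangement is locally isomorphic to an arrangement of linear subspaces. In such coordinates $Bl_FX$, the divisor $E$ and all the transforms $\widetilde G$ are given by explicit equations, and the assertion reduces to the purely linear statement: the connected components of the intersections of the transformed subspaces together with $E$ form an arrangement of subvarieties, and these transforms form a building set for it. This is the local form of the result of \cite{DCP1}; it is verified by a short case analysis according to whether $F\subseteq G$ — in which case $\widetilde G=Bl_FG$ and $E\cap\widetilde G$ is its exceptional divisor — or $F\cap G$ is a proper subspace of $G$, in which case $\widetilde G$ meets $E$ transversally or is disjoint from it; in all cases the $\G'$-factors of a new stratum are determined by the $\G$-factors of its image under $\pi$. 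Combining the charts obtained this way with the opens $\pi^{-1}(U_i)$ produces the cover required by Definition \ref{defbuildinggenerale}.

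The step I expect to be the main obstacle is the bookkeeping around the exceptional divisor. One has to check that the connected components of an intersection $\widetilde{G_1}\cap\cdots\cap\widetilde{G_k}$ that lie inside $E$ are exactly those predicted by the local linear model and that they belong to the induced arrangement, and that the transversality of the $\G$-factors of a stratum of $\Lambda$ is inherited by the $\G'$-factors of the corresponding stratum of the blown-up arrangement; for this one uses that $E$ is a smooth divisor meeting each $\widetilde G$ transversally along the relevant loci, which is again visible in the coordinate description. Once this compatibility has been established chart by chart, conditions a) and b) of Definition \ref{defbuildinggenerale} hold for $\G'$ in $Bl_FX$, and the proposition follows; for the complete details we refer to \cite{li}.
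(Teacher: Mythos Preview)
Your proposal is correct and follows essentially the same route as the paper: the paper's proof is a two-sentence sketch that invokes Li's result for simple arrangements and then appeals to the locality of the definition of building set, which is exactly the reduction you carry out in more detail (passing to the opens $U_i$, then to a local coordinate model and the linear case of \cite{DCP1}/\cite{li}). Your added discussion of the exceptional divisor and the case analysis for $\widetilde G$ merely unpacks what Li does in the simple case, so there is no genuine divergence in strategy.
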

\begin{proof} In fact Li shows this for a building set of a simple arrangement. But since the  definition of building set is local, one can easily adapt his proof  (see also Section 5.4  of \cite{li}).\end{proof}
Using this in  \cite{li}, Theorem 1.3  and the discussion following it, one shows

\begin{teo}
%[see  \cite{li}, Theorem 1.3] 
\label{teo:listabuilding}
Let \(\G\) be a 
%simple
 building set of subvarieties in a nonsingular variety \(X\). Let us arrange the elements \(G_1,G_2,...,G_m\)  of \(\G\) in such a way that for every \(1\leq i \leq N\) the set \(\mathcal G_h=\{G_1,G_2,\ldots , G_h\}\) is building.  Then if for each $0\leq h\leq m$, we set $X_0=X$ and  \(X_h:=Y(X,\G_h)\), for $h>0$, we have 
\[X_h=Bl_{\tilde G_h}X_{h-1},\]
where   \({\tilde G_h}\) denotes the dominant transform\footnote{In the blowup of a variety \(M\) along a center \(F\)  the dominant transform of a subvariety \(Z\) coincides with the strict transform if \(Z\not\subset  F\) (and therefore it is  isomorphic to the blowup of \(Z\) along \(Z\cap F\)) and  to \(\pi^{-1}(Z)\) if \(Z\subset F\), where \(\pi\: : \: BL_FM\rightarrow M\) is the projection.} of $G_h$ in $X_{h-1}$.
\end{teo}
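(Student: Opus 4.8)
The plan is to argue by induction on $m$. For $m=0$ there is nothing to prove, and for $m=1$ the model $Y(X,\G_1)$ is by definition the closure of $X\setminus G_1$ inside $Bl_{G_1}X$, which is all of $Bl_{G_1}X=Bl_{\tilde G_1}X_0$ because $\tilde G_1=G_1$. Assume now $m\geq 2$ and that the statement holds for every building set with fewer than $m$ elements. Since the notions of building set, of the arrangement it induces, and of the wonderful model are local on $X$ (Definition \ref{defbuildinggenerale} and the construction preceding Proposition 2.8 of \cite{li}), and since both the blowup and the formation of $Y(-,-)$ commute with restriction to an open subset of $X$, it suffices to carry out the inductive step after replacing $X$ by the members of an open cover on which the arrangement $\Lambda$ induced by $\G_m$ is simple. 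We therefore assume $\Lambda$ simple from now on.

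By the inductive hypothesis applied to $\G_{m-1}$ (which is building by the choice of the ordering), $X_{m-1}=Y(X,\G_{m-1})$ is the iterated blowup $Bl_{\tilde G_{m-1}}\cdots Bl_{\tilde G_1}X$, so there is a projective birational morphism $p\colon X_{m-1}\to X$; moreover, by the structure theory of building sets and their iterated blowups developed in \cite{li} (of which Proposition 2.8 is the first step), the transforms of the members of $\G_m$ keep forming an arrangement of clean nonsingular subvarieties along the tower, so in particular the dominant transform $\tilde G_m\subset X_{m-1}$ of $G_m$ is nonsingular. Now view $Y(X,\G_m)$ inside $\prod_{G\in\G_m}Bl_GX=\bigl(\prod_{G\in\G_{m-1}}Bl_GX\bigr)\times_X Bl_{G_m}X$. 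Projecting away the last factor sends the dense open set $\emme:=X\setminus\bigcup_{\Lambda_i\in\Lambda}\Lambda_i$ into $Y(X,\G_{m-1})$, hence induces a projective birational morphism $f\colon Y(X,\G_m)\to Y(X,\G_{m-1})$ which is an isomorphism over $\emme$; retaining also the last projection exhibits $Y(X,\G_m)$ as the closure of $\emme$ inside $Y(X,\G_{m-1})\times_X Bl_{G_m}X$, that is, as the component of this fibre product that dominates $Y(X,\G_{m-1})$. Equivalently, $Y(X,\G_m)=Bl_{\mathcal J}\,Y(X,\G_{m-1})$, where $\mathcal J=p^{-1}\Ic_{G_m}\cdot\Oc_{X_{m-1}}$ is the inverse image ideal sheaf of $G_m$. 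So the task reduces to showing that $\mathcal J=\Ic_{\tilde G_m}\cdot\elle$ for some invertible sheaf $\elle$ supported on the exceptional divisors of the tower; granting this, blowing up $\mathcal J$ agrees with blowing up $\Ic_{\tilde G_m}$, and since $\tilde G_m$ is nonsingular the latter is $Bl_{\tilde G_m}X_{m-1}$. Hence $X_m=Y(X,\G_m)=Bl_{\tilde G_m}X_{m-1}$ and the induction is complete.

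I expect the one genuinely delicate point to be the factorization $\mathcal J=\Ic_{\tilde G_m}\cdot\elle$ — equivalently, the statement that the closure $Y(X,\G_m)$ is the \emph{plain} blowup $Bl_{\tilde G_m}X_{m-1}$ and not a larger modification of it — together with the smoothness of $\tilde G_m$ used above. Both are local on $X$, and on a sufficiently small open set the cleanness of the intersections, the transversality of the $\G_m$-factors, and the nested set combinatorics (again via Proposition 2.8 of \cite{li}, which propagates the building property through each blowup) produce an explicit local normal form in which the inverse image ideal $p^{-1}\Ic_{G_m}$ manifestly has the stated shape and $\tilde G_m$ is manifestly smooth. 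Once this local analysis is in place, the only remaining care concerns the difference between strict and dominant transforms — the dominant transform being forced precisely when $G_m$ lies inside one of the earlier centers — and this is routine bookkeeping. This is in substance the content of \cite[Theorem 1.3]{li} and of the discussion following it.
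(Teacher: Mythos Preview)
The paper does not give a proof of this theorem at all: it simply records the statement as a direct consequence of Li's Theorem~1.3 and Proposition~2.8 (the latter being the inductive engine that propagates the building property through each blowup along a minimal element). So there is no ``paper's own proof'' to compare against beyond the bare citation.

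Your sketch is a reasonable reconstruction of the standard argument behind Li's theorem, and you correctly isolate the one nontrivial step, namely the factorization $p^{-1}\Ic_{G_m}\cdot\Oc_{X_{m-1}}=\Ic_{\tilde G_m}\cdot\elle$ with $\elle$ invertible (equivalently, that the dominant component of $X_{m-1}\times_X Bl_{G_m}X$ is the honest blowup along the smooth center $\tilde G_m$). Two small remarks. First, the equality $\prod_{G\in\G_m}Bl_GX=\bigl(\prod_{G\in\G_{m-1}}Bl_GX\bigr)\times_X Bl_{G_m}X$ is not literally true as written (the left side is an ordinary product); what is true, and what you use, is that the closure of the diagonal image of $\emme$ lies in the fiber product, so the distinction is harmless. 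Second, the smoothness of $\tilde G_m$ and the ideal factorization really do rest on the running hypothesis that each $\G_h$ is building: Proposition~2.8 is stated for \emph{minimal} elements, so the way Li's argument proceeds is to observe that at each stage the next center is minimal in the transformed building set, which is exactly what the ordering guarantees. You gesture at this but it is worth making explicit, since a naive reading of your sketch might suggest one can blow up $G_m$ last regardless of how it sits in the poset.
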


\begin{rem}\label{pippopippis}
1. We notice that any total ordering  of the elements of a building set \(\G=\{G_1,\ldots ,G_m\}\) which refines the ordering by inclusion, that is $i<j$ if  $G_i\subset G_j$, 
 satisfies  the  condition of Theorem \ref{teo:listabuilding}. 
 
 2. In particular  using  the above ordering  we deduce that $Y(X,\G)$ is obtained from $X$ by a sequence of blow ups each with center a minimal element   in a suitable building set. For every element  \(G\in \G\) we denote by \(D_G\) its dominant transform, that is a divisor  of    $Y(X,\G)$.
\end{rem}

To finish this section let us mention a further result of  Li describing the boundary of \(Y(X,\G)\) in terms of \(\G\)-nested sets:
\begin{teo}[see \cite{li}, Theorem 1.2] 
\label{teoremabordoLi}

The complement in \(Y(X,\G)\) to \(X-\bigcup_{\Lambda_i\in \Lambda}\Lambda_i \) is the union of the divisors \(D_G\), where \(G\) ranges among the elements of  \( \G\).  An intersection of these divisors   
is nonempty if and only if \(\{T_1,...,T_k\}\) is \(\G\)-nested. If the intersection  is nonempty it is transversal.

\end{teo}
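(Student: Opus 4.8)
The plan is to recover Li's Theorem 1.2 of \cite{li} by induction on $m=|\G|$, based on the iterated blowup description of Theorem \ref{teo:listabuilding}, after a preliminary reduction. First I would note that building sets, arrangements and $\G$-nested sets are all defined through an open cover of $X$ on which the arrangement becomes simple, and that both blowing up and the formation of the exceptional divisor commute with restriction to an open subset; consequently every assertion of the theorem is local on $X$, and one may assume $\Lambda$ simple. Next, choosing a minimal element $G_1\in\G$, the proposition recalled above (\cite{li}, Prop. 2.8) produces a building set $\G'$ of proper transforms in $Bl_{G_1}X$ with $Y(X,\G)=Y(Bl_{G_1}X,\G')$; following dominant transforms through the tower, $D_{G_1}$ is the iterated transform of the exceptional divisor $E$ of $\rho\colon Bl_{G_1}X\to X$, while for $i\ge 2$ the divisor $D_{G_i}$ coincides with the divisor $D_{G_i'}$ attached to the proper transform $G_i'\in\G'$. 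This is the inductive set-up, and the three assertions hold for $\G'$ by hypothesis.

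For the statement on the boundary I would argue as follows. Let $p\colon Y(X,\G)\to X$ be the composition of all the blowups. Each center occurring in the tower is a dominant transform of some $G\in\G$, hence its image in $X$ lies in $\bigcup_{G\in\G}G=\bigcup_{\Lambda_i\in\Lambda}\Lambda_i$; therefore no blowup affects $\emme(\Lambda)=X-\bigcup_i\Lambda_i$, the map $p$ restricts to an isomorphism over it, and $Y(X,\G)\setminus\emme(\Lambda)=p^{-1}\big(\bigcup_{G\in\G}G\big)$. Since each $D_G$ dominates $G$, the inclusion $\bigcup_{G\in\G}D_G\subseteq p^{-1}\big(\bigcup_{G\in\G}G\big)$ is clear. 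For the opposite inclusion I would induct: given $y$ with $p(y)\in\bigcup_{G\in\G}G$, either its image in $Bl_{G_1}X$ lies on one of the proper transforms $G_i'$ with $i\ge 2$, in which case the inductive hypothesis for $(Bl_{G_1}X,\G')$ puts $y$ on $D_{G_i'}=D_{G_i}$; or that image lies in $\emme(\Lambda')$, hence (as $\rho^{-1}(\bigcup_i G_i)$ meets $\emme(\Lambda')$ only along $E$) on $E$, over which the contraction $Y(Bl_{G_1}X,\G')\to Bl_{G_1}X$ is an isomorphism, so that $y$ lies on the transform of $E$, namely $D_{G_1}$. Thus $p^{-1}\big(\bigcup_{G\in\G}G\big)=\bigcup_{G\in\G}D_G$.

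The remaining assertions rest on a combinatorial dictionary, the technical heart of the proof, which I would again deduce by locality from the simple case: deletion of $G_1$ together with the passage to proper transforms relates the $\G$-nested subsets of $\G$ to the $\G'$-nested subsets of $\G'$, and under it the intersection $\bigcap_i D_{T_i}$ for a set $\{T_1,\dots,T_k\}$, when nonempty, matches the stratum of $Y(Bl_{G_1}X,\G')$ cut out by the transforms; whether it is nonempty is then governed by the elementary fact that in a blowup the exceptional divisor meets the strict transform of a subvariety if and only if that subvariety meets the center. Combined with the inductive hypothesis for $\G'$, this gives both that $\bigcap_i D_{T_i}\neq\emptyset$ when $\{T_1,\dots,T_k\}$ is $\G$-nested and that it is empty otherwise. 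Transversality I would establish along the same induction, using that blowing up a nonsingular center turns a transversal (normal crossings) configuration of the boundary divisors into another one, the new exceptional divisor being smooth and, thanks to the building set axioms, meeting the strict transforms of the previously constructed boundary divisors transversally. The hard part is exactly the bookkeeping encoded in this dictionary --- controlling, after each blowup, which iterated intersections of the $D_G$ survive and which become empty --- and it is there that the well-definedness and transversality of the $\G$-factors (the building set condition) are used essentially. For the complete argument I would refer to the proof of Theorem 1.2 in \cite{li}, and to \cite{DCP1} for the case of linear subspace arrangements.
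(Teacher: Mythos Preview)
The paper does not give its own proof of this statement: the theorem is quoted from Li (``see \cite{li}, Theorem~1.2'') and used as a black box, so there is no argument in the paper to compare your proposal against. Your sketch is essentially the inductive scheme that Li himself employs --- locality to reduce to simple arrangements, induction on $|\G|$ via the minimal-center blowup of Proposition~2.8, and the bookkeeping you call the ``combinatorial dictionary'' --- and you correctly identify that the substantive content lies in tracking which strict transforms meet after each blowup, which is exactly where the $\G$-factor/transversality axioms enter.

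One small caution: your claim that ``every assertion of the theorem is local on $X$'' is a bit too quick for the nonemptiness direction. The assertion that $\bigcap_i D_{T_i}\neq\emptyset$ whenever $\{T_1,\dots,T_k\}$ is $\G$-nested is a global existence statement and is not obviously detected on a single chart; in the non-simple case one also has to check (cf.\ Remark~\ref{reminsiemevuoto}) that the members of a $\G$-nested set actually have a common point in $X$ to begin with, and then that the corresponding transforms continue to meet after each blowup. Li handles this, and your inductive outline accommodates it, but the reduction-to-simple step alone does not dispose of it. Since you already defer the details to \cite{li}, this is not a gap so much as a point where the phrasing should be more careful.
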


\section{Some further properties of building sets}
\label{generalpropertiesbuilding}

In this section we collect a few facts of a technical nature which will be used later. Let  $\Lambda$ be an arrangement of subvarieties in a connected  nonsingular variety \(X\). Let \(\G\) be a  building set for  $\Lambda$  and let \(F\) be a minimal element in \(\G\). Let us denote by \(\widetilde X\) the blowup \(Bl_FX\) and, for every 
subvariety \(D\),  let us  call  \(\widetilde D\)  the transform of \(D\).

Let us  first recall the following lemma from \cite{li} (originally stated for $\Lambda$ simple arrangement,  but  valid also for the general case due to its local nature).
\begin{lemma}[see \cite{li} Lemma 2.9]
\label{lemmali}
Let \(\G\) be a  building set for $\Lambda$, and let \(F\) be a minimal element in \(\G\). Let consider the blowup  \(\widetilde X=Bl_FX\), and let \(A,B,A_1,A_2,B_1,B_2\) be nonsingular subvarieties of \(X\).

\begin{enumerate}[1.]
\item Suppose that \(A_1\not\subset A_2\) and \(A_2\not\subset A_1\), and suppose that \(A_1\cap A_2=F\) and the intersection is clean. Then \(\widetilde A_1\cap \widetilde A_2=\emptyset\).
\item Suppose that \(A_1\) and \(A_2\) intersect cleanly and that \(F\subsetneq A_1\cap A_2\). Then  \(\widetilde A_1\cap \widetilde A_2=\widetilde{A_1\cap A_2}\).
\item Suppose that \(B_1\) and \(B_2\) intersect cleanly and that \(F\) is transversal to \(B_1,B_2\) and \(B_1\cap B_2\).
 Then  \(\widetilde B_1\cap \widetilde B_2=\widetilde{B_1\cap B_2}\).
\item Suppose that \(A\) is transversal to \(B\), \(F\) is transversal to \(B\) and \(F\subset A\). Then  \(\widetilde A\cap \widetilde B=\widetilde{A\cap B}\).

\end{enumerate}

\end{lemma}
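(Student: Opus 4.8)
The plan is to push everything down to the exceptional divisor $E=\pp(N_{F/X})$ of $\pi\colon\widetilde X=Bl_FX\to X$ and reduce each of the four assertions to an elementary fact about subbundles of the normal bundle $N_{F/X}$. All the statements are Zariski-local on $X$ and involve only the tangent spaces of the subvarieties, so I would work near a point of $F$; if one prefers, one may invoke that clean and transversal intersections of smooth subvarieties can be put locally in linear normal form and replace everything by coordinate subspaces, but the computation below with normal bundles uses only the hypotheses as stated.

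First I would record the local shape of a transform. Let $W$ be one of the smooth subvarieties occurring. If $W\subseteq F$, then $\widetilde W=\pi^{-1}(W)=\pp(N_{F/X})|_W\subseteq E$. If $W\not\subseteq F$ and $W$ meets $F$ cleanly (in particular if $F$ is transversal to $W$), then $W\cap F$ is smooth, $\widetilde W$ is the strict transform and, using cleanness, $\widetilde W\cong Bl_{W\cap F}W$; hence $\widetilde W\setminus E=W\setminus F$, while $\widetilde W\cap E$ is the exceptional divisor $\pp(N_{W\cap F/W})$ of $Bl_{W\cap F}W$, embedded in $E$ through the injection $N_{W\cap F/W}=T_W/T_{W\cap F}\hookrightarrow T_X/T_F=N_{F/X}$ over $W\cap F$, which is injective precisely because $T_{W\cap F}=T_W\cap T_F$. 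In all cases write $\mathcal V_W\subseteq N_{F/X}$ for the resulting subbundle, over the base $W\cap F$ (or $W$, if $W\subseteq F$), so that $\widetilde W\cap E=\pp(\mathcal V_W)$; note that $F$ transversal to $W$ forces $\mathcal V_W=N_{F/X}|_{W\cap F}$ by a dimension count.

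Then the four assertions follow. For (1): since $A_1\cap A_2=F$, every point of $\widetilde A_1\cap\widetilde A_2$ lies over $F$, hence in $E$, and there $\widetilde A_1\cap\widetilde A_2\cap E=\pp(\mathcal V_{A_1})\cap\pp(\mathcal V_{A_2})$; over $p\in F$ its fiber is $\pp\bigl((\mathcal V_{A_1})_p\cap(\mathcal V_{A_2})_p\bigr)$, and $(\mathcal V_{A_1})_p\cap(\mathcal V_{A_2})_p=\bigl((T_{A_1})_p\cap(T_{A_2})_p\bigr)/(T_F)_p=(T_F)_p/(T_F)_p=0$ by cleanness of $A_1\cap A_2=F$, so the intersection is empty. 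For (2), (3), (4) one must show $\widetilde A_1\cap\widetilde A_2=\widetilde{A_1\cap A_2}$ (with $\{A_1,A_2\}$ standing for $\{B_1,B_2\}$ in (3) and for $\{A,B\}$ in (4)). The inclusion $\supseteq$ is formal, since $A_1\cap A_2\subseteq A_i$ gives $\widetilde{A_1\cap A_2}\subseteq\widetilde A_i$. For $\subseteq$, both sides are closed, so it suffices to compare them over $X\setminus F$, where both equal $(A_1\cap A_2)\setminus F$, and on $E$, where $\widetilde A_1\cap\widetilde A_2\cap E=\pp(\mathcal V_{A_1})\cap\pp(\mathcal V_{A_2})$ and $\widetilde{A_1\cap A_2}\cap E=\pp(\mathcal V_{A_1\cap A_2})$, all over the base $F\cap A_1\cap A_2$. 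Thus everything reduces to the fiberwise identity $(\mathcal V_{A_1})_p\cap(\mathcal V_{A_2})_p=(\mathcal V_{A_1\cap A_2})_p$ inside $(N_{F/X})_p$, which in each case follows at once from the hypotheses: in (2), $\mathcal V_{A_i}=T_{A_i}/T_F$ and cleanness gives $T_{A_1}\cap T_{A_2}=T_{A_1\cap A_2}$; in (3), transversality of $F$ to $B_1$, $B_2$ and $B_1\cap B_2$ makes all three subbundles the full $N_{F/X}$ over their respective bases; in (4), $\mathcal V_A=T_A/T_F$, $\mathcal V_B=N_{F/X}|_{F\cap B}$, and, using $T_{A\cap B}=T_A\cap T_B$, $T_{F\cap B}=T_F\cap T_B$ and a dimension count, the image of $N_{F\cap B/(A\cap B)}$ in $N_{F/X}$ is again $T_A/T_F$ (the degenerate subcase $A=F$, where $\widetilde A=E$, is immediate: everything reduces to $\widetilde{F\cap B}=\pp(N_{F/X})|_{F\cap B}$).

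I do not expect a deep obstacle, but the step needing most care is the bookkeeping: each subvariety in play may equal $F$, strictly contain $F$, or meet $F$ properly, and this governs whether its transform is the strict transform or the dominant transform $\pi^{-1}(\,\cdot\,)$; moreover one needs the clean/transversal hypotheses precisely to guarantee that the scheme-theoretic intersection with $F$ is reduced and smooth, so that the strict transform is a blowup along that intersection with the expected projectivized normal bundle as exceptional divisor. Setting up this local picture correctly---equivalently, the local linearization---is the only delicate point; the linear-algebra identities themselves are routine.
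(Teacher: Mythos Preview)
The paper does not give its own proof of this lemma: it is simply quoted from Li's paper (Lemma 2.9 there), so there is no argument in the present paper to compare against. Your proposal is a correct self-contained proof, and the strategy---reducing to the exceptional divisor $E=\pp(N_{F/X})$ and identifying each $\widetilde W\cap E$ with the projectivization of a subbundle of $N_{F/X}$---is exactly the standard one used to prove such statements (and is essentially how Li argues as well). The fiberwise linear-algebra identities you write down are the right ones, and your check that in case (4) the image of $N_{F\cap B/(A\cap B)}$ in $N_{F/X}$ equals $T_A/T_F$ (using $T_F+T_B=T_X$ and $T_F\subset T_A$) is the only place where a short computation is actually needed.

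Two small points worth making explicit when you write this up. First, the decomposition ``check on $X\setminus F$ and on $E$ separately'' is a set-theoretic argument; this is enough here because the lemma concerns the underlying varieties, but you should say so. Second, in cases (3) and (4) the subvariety $A_1\cap A_2$ (i.e.\ $B_1\cap B_2$ or $A\cap B$) need not contain $F$, so you must verify that $F$ meets it cleanly in order to invoke your description of the strict transform; in (3) this is a hypothesis, and in (4) it follows from $T_{A\cap B}\cap T_F=T_A\cap T_B\cap T_F=T_B\cap T_F=T_{F\cap B}$. You essentially have this, but it is the one place where a reader might worry that a hypothesis is being used without being checked.
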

The following simple lemma will be useful later.
\begin{lemma}
\label{lemmacontrolloaperti}
Let \(\G\) be a  building set for $\Lambda$, and let  \(U\) be an open set as in the Definition \ref{defbuildinggenerale}. Let us consider two  subsets \(\{H_1,...,H_k\}\) and \(\{G_1,...,G_s\}\) of  \(\G\). If \(H^0=U\cap \bigcap_{i=1,...,k}H_i\neq \emptyset\) and  \[H^0=U\cap \bigcap_{i=1,...,k}H_i\subset G^0=U\cap \bigcap_{j=1,...,s}G_j\]
then the connected component of \(\bigcap_{i=1,...,k}H_i\) that contains \(H^0\) is contained in the connected component of \(\bigcap_{j=1,...,s}G_j\) that contains \(G^0\).
\end{lemma}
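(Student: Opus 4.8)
The statement is local in nature, so the plan is to work entirely inside the open set $U$ and then transport the conclusion to connected components. First I would observe that, since $U$ is one of the open sets of Definition \ref{defbuildinggenerale}, the restriction $\Lambda_{|U}$ is a simple arrangement; in particular the intersections appearing in the statement behave well (clean intersections, connected components are again strata). Write $K=\bigcap_{i=1}^k H_i$ and $L=\bigcap_{j=1}^s G_j$, so that $H^0$ is a connected component of $K\cap U$, $G^0$ is a connected component of $L\cap U$, and by hypothesis $H^0\subseteq G^0\subseteq L$. Let $K'$ be the connected component of $K$ containing $H^0$ and $L'$ the connected component of $L$ containing $G^0$; we must show $K'\subseteq L'$.

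The key point is that $K'\cap U$ is connected, hence it is a single connected component of $K\cap U$, namely $H^0$; similarly $L'\cap U=G^0$. Indeed, because $\Lambda_{|U}$ is simple, each intersection of elements of $\Lambda$ with $U$ is either empty or a member of $\Lambda_{|U}$, so in particular the trace on $U$ of a connected stratum of $X$ is connected (or empty). Thus $K'\cap U=H^0$ and $L'\cap U=G^0$, and the hypothesis $H^0\subseteq G^0$ gives $K'\cap U\subseteq L'\cap U\subseteq L'$. Now $K'$ is an irreducible (connected nonsingular) closed subvariety of $X$, it meets $U$ in the nonempty set $H^0$, and $L'$ is a closed subvariety containing $H^0$; since $K'\cap U$ is dense in $K'$ and $K'\cap U\subseteq L'$, taking closures yields $K'=\overline{K'\cap U}\subseteq \overline{L'}=L'$, which is exactly the desired inclusion.

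The step I expect to be the only real content — everything else being bookkeeping about components — is the claim that the trace on $U$ of a connected member of the ambient arrangement (or of an intersection of such) is connected. This follows from simplicity of $\Lambda_{|U}$: a connected component of an intersection that meets $U$ restricts to a connected component of the corresponding intersection in $U$, and in a simple arrangement two distinct components of the same intersection never coincide, so no extra breaking-up occurs. One should be a little careful that $K$ and $L$ themselves need not be members of $\Lambda$, only their connected components are; but the argument only uses that each such component is a clean intersection locally, which again is guaranteed on $U$ by simplicity. Once this is in place, the density argument $K'=\overline{H^0}$ closes the proof.
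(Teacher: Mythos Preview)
Your argument is correct and follows essentially the same route as the paper's proof: both use that simplicity of $\Lambda_{|U}$ forces $H^0=U\cap K$ to be connected, identify $H^0$ with $K'\cap U$, and then conclude by the density of a nonempty open subset in the irreducible variety $K'$. The paper compresses all of this into two sentences (``$H^0$ and $G^0$ are connected by Definition~\ref{defbuildinggenerale}; the statement follows since $H^0$ is a dense open set of the connected component of $\bigcap_i H_i$ that contains it''), but the content is identical to what you wrote. One minor quibble: you phrase $H^0$ as ``a connected component of $K\cap U$'' when in fact $H^0=K\cap U$ by definition; this does not affect the validity of the argument.
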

\begin{proof}
First we notice that  \(H^0\) and \(G^0\) are connected by the Definition \ref{defbuildinggenerale}. The  statement follows since \(H^0\) is a dense open set of the connected component of \(\bigcap_{i=1,...,k}H_i\) that contains it.

\end{proof}

\begin{prop}
\label{lemmaintersezione} Let \(\G\) be a building set for  $\Lambda$.  Let us fix an open set \(U\) as in the Definition \ref{defbuildinggenerale} (for brevity, in what follows every object will be restricted to \(U\) but we are going to omit the symbol of restriction, for instance we will denote by \(G\) the set \(G\cap U\) for every \(G\in \G\)). 
 Let \(G_1,G_2\in  \G\) be not comparable. Then either  \(G_1\cap G_2=\emptyset\),  or \(G_1\cap G_2\in \G\) or \(G_1\cap G_2\) is transversal.
\end{prop}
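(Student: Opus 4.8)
The plan is to work locally on the open set $U$ where the arrangement $\Lambda_{|U}$ is simple and $\G_{|U}$ is a building set for it, as provided by Definition \ref{defbuildinggenerale}; since all objects are restricted to $U$, every intersection we consider is a (possibly empty) union of strata of a \emph{simple} arrangement, hence each connected component of $G_1\cap G_2$ is itself an element of $\Lambda$. First I would dispose of the trivial case $G_1\cap G_2=\emptyset$, and otherwise pick a connected component $Z$ of $G_1\cap G_2$, which belongs to $\Lambda$. Because $\G$ is building, $Z$ has a well-defined set of $\G$-factors: the minimal elements $F_1,\dots,F_t\in\G$ containing $Z$, which intersect transversally with $\bigcap_j F_j=Z$ (if $Z\in\G$ itself, then $t=1$ and $F_1=Z$).

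The key step is to compare the $\G$-factors of $Z$ with $G_1$ and $G_2$. Since $G_1\supseteq Z$ and $G_1\in\G$, by minimality each $\G$-factor $F_j$ satisfies $F_j\subseteq G_1$; likewise $F_j\subseteq G_2$ for all $j$, so $F_j\subseteq G_1\cap G_2$, and in fact $F_j$ is contained in the connected component of $G_1\cap G_2$ containing it — but a priori this could be a component other than $Z$. Here I would use Lemma \ref{lemmacontrolloaperti}: working inside $U$, if $F_j$ meets $Z$ then the component of $G_1\cap G_2$ containing $F_j$ is the one containing that intersection point, and one checks $F_j\cap Z\neq\emptyset$ because both contain... actually the cleaner route is to observe that $Z=\bigcap_j F_j$ and each $F_j\supseteq Z$, so all the $F_j$ live over $Z$; then $G_1\cap G_2\supseteq \bigcap_j F_j=Z$ and the minimal elements of $\{G\in\G: G\supseteq Z\}$ are exactly $F_1,\dots,F_t$. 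Now if $t\geq 2$, I claim $\{G_1,G_2\}$ (restricted to $U$) is forced to be among these or leads to transversality: since $G_1,G_2$ are both non-minimal-or-equal elements containing $Z$, and the $\G$-factors of $Z$ intersect transversally, I would argue that $G_1\cap G_2$ contains the transversal intersection $\bigcap_j F_j$, and locally near $Z$ the subvarieties $G_1,G_2$ are cut out by subsets of the equations cutting the $F_j$, forcing $G_1$ and $G_2$ to meet transversally along $Z$ (a clean intersection of two members of a building set, sharing the $\G$-factor structure, is transversal exactly when it is not one of the $F_j$'s intersection pattern collapsing).

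More precisely, the dichotomy I would establish is: either $Z\in\G$, in which case $Z$ is one of the $\G$-factors and the other factors together with $Z$... no — if $Z=G_1\cap G_2$ as sets (i.e. $G_1\cap G_2$ is connected) and $Z\in\G$ we are in the second alternative; if $Z\notin\G$ then the minimal elements of $\G$ above $Z$, which include candidates among $G_1,G_2$, must intersect transversally with intersection $Z$, and since $G_1,G_2$ themselves are minimal among $\{G\in\G:G\supseteq Z\}$ (any $\G$-element strictly between would contradict $G_1,G_2$ being the data we started with — here I need that $G_1,G_2$ are \emph{incomparable}, so neither contains the other, hence both are minimal in $\{G\supseteq Z\}$ only if there is no third element below them, otherwise I replace them by $\G$-factors $F_j$, but $G_1\cap G_2=\bigcup(\text{components})$ and on $Z$ the local equations of $G_1$, $G_2$ are independent subsystems), the $\G$-factor transversality gives that $G_1$ pitches transversally to $G_2$ along $Z$. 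Running this over every connected component $Z$ of $G_1\cap G_2$ and then, finally, removing the restriction to $U$ via the local nature of transversality (checked on the cover $\{U_i\}$), one gets the three global alternatives.

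The main obstacle I anticipate is the bookkeeping around \emph{which} connected component of $G_1\cap G_2$ a given $\G$-factor lies in: Lemma \ref{lemmacontrolloaperti} is exactly the tool to pin this down, but it requires verifying that the relevant intersections with $U$ are nonempty and connected, so the argument must be phrased carefully so that the ``$G$'' in that lemma is genuinely the component containing the chosen point. Once the component-tracking is handled, the transversality claim itself is a routine local computation with the clean-intersection hypothesis plus the transversality of the $\G$-factors, and the passage from $U$ back to $X$ is immediate since transversality and ``membership in $\G$'' are local conditions on the cover from Definition \ref{defbuildinggenerale}.
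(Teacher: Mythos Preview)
Your argument has a genuine error at the key step. You claim: ``Since $G_1\supseteq Z$ and $G_1\in\G$, by minimality each $\G$-factor $F_j$ satisfies $F_j\subseteq G_1$; likewise $F_j\subseteq G_2$ for all $j$.'' This is false. The $F_j$ are the minimal elements of the finite poset $\{G\in\G:G\supseteq Z\}$; the fact that $G_1$ lies in this poset only guarantees that $G_1$ contains \emph{some} $F_j$, not all of them. (Indeed, if all $F_j$ sat inside both $G_1$ and $G_2$ you would get $Z=\bigcap_j F_j\subseteq G_1\cap G_2=Z$ trivially, with no information about transversality.) Your subsequent attempts to recover (``the cleaner route'', ``More precisely'') never arrive at a precise argument; in particular the suggestion that $G_1$ and $G_2$ are themselves $\G$-factors of $Z$ is asserted rather than proved.

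The paper's proof exploits exactly the correct version of this observation and is very short. Write $G_1\cap G_2=H_1\cap\cdots\cap H_k$ with the $H_j$ the $\G$-factors (a transversal intersection). Since $G_1\in\G$ contains $G_1\cap G_2$, some factor lies in $G_1$; say $H_1\subseteq G_1$. If also $H_1\subseteq G_2$ then $H_1\subseteq G_1\cap G_2\subseteq H_1$, so $G_1\cap G_2=H_1\in\G$. Otherwise some other factor, say $H_2$, lies in $G_2$; then $H_1\cap H_2\subseteq G_1\cap G_2\subseteq H_1\cap H_2$ forces $k=2$ and $G_1\cap G_2=H_1\cap H_2$. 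A tangent-space dimension count at any $y\in G_1\cap G_2$ --- using transversality of $H_1,H_2$, cleanness of $G_1\cap G_2$, and $T_{H_i,y}\subseteq T_{G_i,y}$ --- then forces $G_i=H_i$, so $G_1\cap G_2$ is transversal. This last step is precisely the conclusion you were reaching for (that $G_1,G_2$ \emph{are} the $\G$-factors in the non-$\G$ case), but it requires first pinning down $k=2$ via the ``some, not all'' containment.

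A side remark: your detour through connected components of $G_1\cap G_2$ is unnecessary. On $U$ the arrangement is simple, so by Definition~\ref{def:simple}(ii) the nonempty intersection $G_1\cap G_2$ is already a single element of $\Lambda_{|U}$ and hence connected; Lemma~\ref{lemmacontrolloaperti} plays no role in this proposition.
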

\begin{proof}

Let us suppose \(G_1\cap G_2\neq \emptyset\). We know by the definition of building set that 
\begin{equation}
\label{eqintersezioni} G_1\cap G_2=H_1\cap H_2\cap ...\cap H_k \end{equation}
where the \(H_j\)'s are the minimal elements in \(\G\) that contain \(G_1\cap G_2\) and the intersection among the \(H_j\)'s is transversal.
We can suppose, up to reordering,  that \(H_1\subset G_1\). 

If we also have \(H_1\subset G_2\) then \(H_1\subset G_1\cap G_2\), while from the equality (\ref{eqintersezioni})  we have \(G_1\cap G_2\subset H_1\). This means that \(G_1\cap G_2=H_1\) and therefore it belongs to \(\G\).

 If, on the other hand, \(H_1\) is not contained in \( G_2\),   we can suppose, up to reordering,  that \(H_2\subset G_2\).
Then \(H_1\cap H_2\subset G_1\cap G_2\) while from the equality (\ref{eqintersezioni}) we have \(G_1\cap G_2\subset H_1\cap H_2\). This means that \(G_1\cap G_2= H_1\cap H_2\) so that in particular \(k=2\).

Since the intersection \(H_1\cap H_2\) is transversal, then also \(G_1\cap G_2\) is transversal. Indeed once one fixes a point \(y\in H_1\cap H_2 \), the set of linear equations that describe the tangent space \(T_{H_i,y}\)  includes the set of equations that describe    \(T_{G_i,y}\). Since the intersections are clean and all the involved varieties are smooth this implies in particular that  \(G_1=H_1\) and \(G_2=H_2\).

\end{proof}
\begin{cor}[see Lemma 2.6 in \cite{li}]
\label{cortransversale}
Let \(\G\) be a  building set.  Let \(F\) be a  element in \(\G\).
\begin{enumerate}[1.]
\item  If $F$ is minimal, for  any  \(G\in \G \),  either \(G\) contains \(F\), or \(F\cap G=\emptyset\),  or \(F\cap G\) is transversal.
\item 
Let $K$ be an element of the arrangement induced by \(\G\) such that   none of  its    \(\G\) factors contains $F$.
Assume that $H=K\cap F$ also has  $F$ as one of its $\G$ factors. 
Then the intersection of  \(K\) and \( F\) is transversal.
\end{enumerate}
\end{cor}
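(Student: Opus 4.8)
The plan is to deduce both parts of the corollary from Proposition \ref{lemmaintersezione}, working locally on an open set $U$ as in Definition \ref{defbuildinggenerale} and then globalizing via Lemma \ref{lemmacontrolloaperti}. For part 1, suppose $F$ is minimal in $\G$ and let $G\in\G$. If $F\subseteq G$ we are done, so assume $F\not\subseteq G$; since $F$ is minimal in $\G$ we also have $G\not\subseteq F$ (unless $G=F$, a trivial case), so $F$ and $G$ are not comparable. Proposition \ref{lemmaintersezione} then gives three possibilities: $F\cap G=\emptyset$; $F\cap G\in\G$; or $F\cap G$ is transversal. The middle case must be ruled out: if $F\cap G\in\G$, then $F\cap G$ is an element of $\G$ strictly contained in $F$ (strictly, because $F\not\subseteq G$ forces $F\cap G\subsetneq F$), contradicting minimality of $F$. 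Hence only $F\cap G=\emptyset$ or $F\cap G$ transversal can occur, which is the claim. One must be slightly careful to run this argument on each chart $U$ and then invoke Lemma \ref{lemmacontrolloaperti} together with Proposition \ref{lemmaintersezione}'s local formulation, but transversality of an intersection is itself a local (pointwise) condition, so it suffices to check it chart by chart.

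For part 2, the hypotheses are: $K$ is an element of the arrangement $\Lambda$ induced by $\G$, none of the $\G$-factors of $K$ contains $F$, and $H=K\cap F$ has $F$ among its own $\G$-factors. Let $G_1,\dots,G_r$ be the $\G$-factors of $K$, so $K=G_1\cap\cdots\cap G_r$ transversally. The idea is to intersect everything with $F$: by part 1, since $F$ is minimal and no $G_i$ contains $F$, each $F\cap G_i$ is either empty or transversal — and it cannot be empty, since $H=F\cap K=F\cap G_1\cap\cdots\cap G_r\neq\emptyset$ (it has $F$ as a $\G$-factor, in particular $H$ is nonempty). So each $F\cap G_i$ is transversal. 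I would then argue that $F$ is transversal to $K=G_1\cap\cdots\cap G_r$: since the $G_i$ meet transversally along $K$ and $F$ meets each $G_i$ transversally, a tangent-space count at a point $y\in H$ shows the codimensions add up, i.e. $\operatorname{codim} T_{H,y} = \operatorname{codim} T_{F,y} + \sum_i (\operatorname{codim} T_{G_i,y} - \operatorname{codim} T_{K,y}\text{-contributions})$; concretely, because $T_{K,y}=\bigcap_i T_{G_i,y}$ with the codimensions of the $T_{G_i,y}$ adding, and $T_{F,y}$ is transversal to each $T_{G_i,y}$, one gets $T_{F,y}+T_{K,y}=T_{X,y}$, which is exactly transversality of $F$ and $K$. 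This is the step I expect to be the main (though still routine) obstacle: verifying that ``transversal to each factor'' propagates to ``transversal to the transverse intersection of the factors,'' which is a standard linear-algebra fact about transversality but needs the cleanness hypotheses to translate the scheme-theoretic statement into the tangent-space statement.

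Once $F$ is transversal to $K$, the conclusion $K\cap F$ is transversal is immediate — indeed that is precisely what "$F$ transversal to $K$" means, and it also forces $H=K\cap F$ to be an element of $\Lambda$ of the expected codimension with $\G$-factors $\{G_1,\dots,G_r,F\}$, consistent with the hypothesis that $F$ is a $\G$-factor of $H$. I would phrase the write-up so that part 1 is invoked as a black box inside part 2, keeping the two arguments cleanly separated, and I would carry out the tangent-space bookkeeping once, at a fixed point $y\in H$, using that all intersections in sight are clean so that the relevant tangent spaces behave like those of transverse linear subspaces. The only place where care is needed is the reduction to a single chart $U$: I would fix $U$ meeting $H$, note that all the hypotheses restrict well to $U$ (factors, transversality, nonemptiness of $H\cap U$), run the local argument, and observe that transversality of $K\cap F$ at the points of $H\cap U$ together with Lemma \ref{lemmacontrolloaperti} (to match up connected components across charts) gives the global statement.
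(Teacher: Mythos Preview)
Your argument for part 1 is correct and matches the paper's proof essentially verbatim.

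Part 2, however, has two genuine gaps. First, you invoke part 1 to conclude that each \(F\cap G_i\) is transversal, but part 1 requires \(F\) to be minimal in \(\G\), and part 2 makes no such assumption. In fact the paper later applies part 2 with \(F=G_m\), the \emph{maximal} element of \(\G\), so minimality is truly unavailable here.

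Second, and more seriously, the linear-algebra step you call ``standard'' is false: transversality to each factor of a transverse intersection does \emph{not} imply transversality to the intersection. For instance, in a 4-dimensional ambient space take \(G_1=\{x_1=0\}\), \(G_2=\{x_2=0\}\), \(K=G_1\cap G_2\), and \(F=\{x_3=0,\ x_1=x_2\}\); then \(F\) meets each \(G_i\) transversally but \(F\cap K\) has codimension 3, not \(2+2=4\). Your argument never substantively uses the hypothesis that \(F\) is a \(\G\)-factor of \(H=K\cap F\); you only use it to get \(H\neq\emptyset\). But this hypothesis is exactly what rules out such counterexamples, and it must enter the codimension count.

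The paper's route is different and short: write \(H=B_1\cap\cdots\cap B_j\cap F\) as the transversal intersection of its \(\G\)-factors, observe that every \(\G\)-factor of \(K\) contains some \(B_i\) (since it contains \(H\) but not \(F\)), hence \(S:=B_1\cap\cdots\cap B_j\subseteq K\). Then \(H=K\cap F=S\cap F\) with \(S\cap F\) transversal and \(S\subseteq K\) forces \(K\cap F\) to be transversal as well, by comparing codimensions. This uses the \(\G\)-factor hypothesis on \(H\) directly and avoids both of the issues above.
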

\begin{proof}

First we notice that, by Lemma \ref{lemmacontrolloaperti}, for every open set \(U\) as in the Definition \ref{defbuildinggenerale},  \(F\cap U\) is empty or it is minimal also for the restriction of \(\G\) to \(U\). Therefore it is sufficient to prove our statement locally (and from now on we will think of every object as intersected with \(U\)).

So  \((1)\) is an immediate consequence of Proposition \ref{lemmaintersezione} since if \(F\not\subset G\)  and \(F\cap G\neq \emptyset\), then \(F\cap G\notin \G\) by minimality of \(F\).

As for   \((2)\),  since \(\G\) is building,    we can write
\[H=B_1\cap ..\cap B_j\cap F\]
where  \(B_1,...,B_j,F\) (with \(j\geq 1\)) are the  \(\G\)  factors of $H$ and their intersection is transversal.

Let $G$ be a $\G$ factor of $K$. Since $G$ contains $H$ but does not contain $F$, it must contain one of the $B_i$'s. It follow that 
$S=B_1\cap\cdots \cap B_j\subset K$.
We deduce that,  since 
$$H=K\cap F=S\cap F,$$ $K$ and $F$ intersect  cleanly and $S$ and $F$ intersect transversally, also $K$ and $F$ intersect transversally.

\end{proof}
\section{Well connected building sets}
\label{wellconnectedbuilding}
In the computation of the cohomology of compact wonderful models  we will need some building sets that have an extra property. 

\begin{defin}
A building set \(\G\)   is called {\em well connected} if  
%\begin{itemize}
%
%
%\item 
for any subset \(\{G_1,...,G_k\}\) in \(\G\),  the intersection \(G_1\cap G_2\cap ...\cap G_k\) is either empty,  or connected  or it is the union of connected components each belonging to \(\G\).

%\end{itemize}
\end{defin}

\begin{rem}
\label{remconnected}
In particular, if \(\G\) is well connected and \(F\in \G\) is minimal, we have that for every \(G\in \G\)  the intersection \(G\cap F\) is either empty or connected.

\end{rem}

Notice that, for example, if $\Lambda$ is an arrangement of subvarieties  then  $\Lambda$ itself  is a, rather obvious, example of a well connected building set.

As another example, if  $\Lambda$ is simple then clearly every building set for $\Lambda$ is well connected.

The following two propositions are going to be crucial in our inductive procedure.
\label{seclemma}
Let \(X\) be a smooth variety
and   \(\G=\{G_1,...,G_m\}\)   a well connected building set of subvarieties of $X$ whose  elements are ordered in a way that refines inclusion.
% if \(G_i\subsetneq  G_j\) then \(i<j \).

\begin{prop}
For every \(k=1,...,m\), the set \(\G_k=\{G_1,...,G_k\}\) is a well connected building set.
\end{prop}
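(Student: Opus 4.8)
The plan is to combine two facts: first, that $\G_k$ is a building set (which is essentially already known from Theorem \ref{teo:listabuilding} and Remark \ref{pippopippis}, since the elements are ordered to refine inclusion, so each initial segment $\G_k$ is building); and second, that the well connectedness of $\G$ descends to $\G_k$. Since the building set property is settled, the real content is the well connectedness, so I would focus the argument there.

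For well connectedness of $\G_k$, I would take an arbitrary subset $\{G_{i_1},\dots,G_{i_r}\}\subseteq \G_k$ with nonempty intersection $I = G_{i_1}\cap\cdots\cap G_{i_r}$ and argue that each connected component of $I$ either is all of $I$ (i.e.\ $I$ is connected) or belongs to $\G$, and moreover to $\G_k$. The first part is immediate: $\{G_{i_1},\dots,G_{i_r}\}$ is also a subset of the well connected building set $\G$, so by definition each connected component of $I$ is either equal to $I$ or lies in $\G$. The point that requires a small argument is that such a connected component $C\in\G$ actually lies in $\G_k$, i.e.\ has index $\le k$. Here I would use that the ordering refines inclusion: $C$ is contained in each $G_{i_j}$, and in particular $C\subseteq G_{i_1}$ where $i_1\le k$; if $C$ is properly contained in $G_{i_1}$ then its index is strictly smaller than $i_1\le k$, hence $C\in\G_k$; and if $C=G_{i_1}$ then trivially $C\in\G_k$. (Strictly speaking one should observe $C\subseteq G_{i_j}$ for the $G_{i_j}$ of smallest index among the chosen ones, and compare indices with that one; since all $i_j\le k$ this still gives index $\le k$.)

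So the key steps are: (1) invoke Remark \ref{pippopippis} and Theorem \ref{teo:listabuilding} to get that $\G_k$ is a building set; (2) for an arbitrary intersection of elements of $\G_k$, use well connectedness of the ambient $\G$ to split its components; (3) use the fact that the ordering refines inclusion to bound the indices of those components by $k$, concluding that they lie in $\G_k$. I expect step (3) — the bookkeeping that a connected component contained in the chosen $G_i$'s has index at most $k$ — to be the only slightly delicate point, but it is genuinely a two-line observation using that a proper subvariety in the inclusion-refined order necessarily has a smaller index. There is no geometric obstacle here: once the building set property of $\G_k$ is granted, well connectedness is inherited almost formally, because the defining condition of well connectedness quantifies over subsets of the building set and is monotone under passing to a subset that is itself building.

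I would write the proof as a short paragraph, not an \texttt{enumerate}, to keep it proportionate to its difficulty.
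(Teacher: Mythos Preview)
Your proof is correct and matches the paper's approach: the paper's one-line ``a similar reasoning also shows that $\G_k$ is well connected'' is precisely your steps (2)--(3), using that a component $C\in\G$ contained in some $G_{i_j}$ with $i_j\le k$ must have index $\le k$ since the ordering refines inclusion. The only cosmetic difference is that the paper re-derives the building property of $\G_k$ by a short direct local argument (writing an intersection as the transversal intersection of its $\G$-factors and observing each factor is contained in one of the chosen $G_{i_j}$'s, hence has index $\le k$) rather than invoking Remark~\ref{pippopippis}.
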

\begin{proof}
Let us prove that \(\G_k\) is building. 

First we check what happens `locally'.  We fix an open set    \(U\)  as in the Definition \ref{defbuildinggenerale} and   in what follows we will consider the restriction of every object to \(U\).
%and by abuse of notation  omit  the intersection \(\cap U\).   

Since \(\G\) is building, we know that every intersection \(G_{j_1}\cap\cdots \cap G_{j_s}\) of elements of \(\G_k\)  is equal to the transversal intersection  of the  minimal elements \(B_1,...,B_h\) of \(\G\) that contain \(G_{j_1}\cap\cdots \cap G_{j_s}\). Up to reordering we can assume that the set $\{B_1,\ldots , B_r\}$ for some $r\leq s$ consists of those among the $B_i'$s which are contained in at least one among the $G_{j_t}$'s . Notice that necessarily 
$$\bigcap_{i=1}^rB_j=\bigcap_{i=1}^hB_j=G_{j_1}\cap\cdots \cap G_{j_s}.$$
Since the intersection of $B_1,\ldots ,B_h$ is transversal,  we clearly have that $r=h$ and so 
we deduce that for each $j\leq h$, there is an $a\leq k$ with \(B_j=G_a\).

Going back from the local to the global setting, we observe that with the argument above we have proven that \(B_j\cap U=G_a\cap U\). Since intersecting with $U$ preserves inclusion relations by Lemma \ref{lemmacontrolloaperti},   we immediately deduce that   \(B_i\in \mathcal G_k\)  for each $i=1,\ldots ,h$. 

A similar  reasoning  also shows that \(\G_k\) is  well connected.

\end{proof}

%Let, as in the previous section, \(X\)  be  a smooth variety  and   \(\G=\{G_1,...,G_m\}\)   a well connected building set of subvarieties of $X$ whose  elements are ordered in  a way that refines inclusion.
% if \(G_i\subsetneq  G_j\) then \(i<j \).

Let us consider the variety \(Z:=G_m\).  Let us take the family $\mathcal H=\{H_1,\ldots ,H_u\}$ of non empty subvarieties in $Z$ which are obtained as connected components of intersections $G_i\cap Z$ with $i<m$. 

Let us remark that, since \(\G\) is well connected, if $G_i\cap Z$ is non connected (and of course non empty)
 its  connected components  belong to \(\G\) so that  each of them equals some $G_j\subsetneq Z$. We deduce that  we do not need to add the connected components of the disconnected intersections   $G_i\cap Z$. In particular $u\leq m-1$. 
 %We set $\mathcal H:=\mathcal G_{m-1}\cap Z$. 
 
 We order $\mathcal H$ in such a way that if for each  $1\leq i\leq u$, we set $s_i\leq m-1$ equal to the minimum index such that $H_i= G_{s_i}\cap Z$, $s_i<s_j$ as soon as $i<j$. 
 %It follows that
% $\mathcal H_i=\mathcal G_{s_i}\cap Z$ for each $1\leq i\leq t$.

\begin{prop}
\label{lemmarrangiamentoindotto}
The family of subvarieties \(\mathcal H=\{H_1,...,H_u\}\) in \(Z\) 
is building and well connected. \end{prop}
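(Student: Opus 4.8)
The plan is to verify the two required properties — being a building set and being well connected — locally, since both notions are local by Definition~\ref{defbuildinggenerale} and by the definition of well connected building set. So I fix an open set \(U\) as in Definition~\ref{defbuildinggenerale} and replace every variety by its trace on \(U\); note that, because the ordering of \(\G\) refines inclusion, \(Z=G_m\) is a \emph{maximal} element of \(\G\), hence no \(G_i\) with \(i<m\) contains \(Z\). The strata of the arrangement induced by \(\mathcal H\) in \(Z\) are the connected components of the intersections \(G_{a_1}\cap\cdots\cap G_{a_t}\cap Z\) with \(a_1,\dots ,a_t<m\), and each of these is a stratum of the arrangement induced by \(\G\) that is contained in \(Z\); so it suffices to fix a stratum \(K\) of the arrangement induced by \(\G\) with \(K\subseteq Z\), not itself belonging to \(\mathcal H\) (otherwise there is nothing to check for \(K\)), and to identify the minimal elements of \(\mathcal H\) containing \(K\).

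First I would analyse the \(\G\)-factors of \(K\). Write \(K=B_1\cap\cdots\cap B_h\) as the transversal intersection of its \(\G\)-factors. Since \(Z\in\G\) and \(Z\supseteq K\), a minimal element of \(\{G\in\G:\ K\subseteq G\subseteq Z\}\) is in fact a \(\G\)-factor of \(K\) (a strictly smaller element of \(\G\) containing \(K\) would again lie in \(Z\)), so \(Z\) contains at least one \(B_j\); on the other hand \(Z\) cannot contain two of them, because \(B_j\subseteq Z\) gives \(T_{B_j,y}\subseteq T_{Z,y}\subsetneq T_{X,y}\) for \(y\in K\), and two such \(B_j\)'s would violate the transversality of \(\{B_1,\dots ,B_h\}\). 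Reorder so that \(B_1\subseteq Z\) and \(B_2,\dots ,B_h\not\subseteq Z\). For each \(j\geq 2\) the elements \(B_j\) and \(Z\) of \(\G\) are not comparable (\(B_j\not\subseteq Z\), and \(Z\not\subseteq B_j\) since \(Z\) is maximal), so by Proposition~\ref{lemmaintersezione} their non empty intersection \(B_j\cap Z\) is either transversal or an element of \(\G\); the latter is impossible, as such an element would contain \(K\) and be contained in \(B_j\), against the minimality of the \(\G\)-factor \(B_j\). Hence \(B_j\) meets \(Z\) transversally for every \(j\geq 2\).

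Next I would deduce that \(\mathcal H\) is a building set. I claim that the minimal elements of \(\mathcal H\) containing \(K\) are the connected components through \(K\) of \(B_1\cap Z=B_1,\ B_2\cap Z,\ \dots ,\ B_h\cap Z\), with \(B_1\) omitted from the list when \(B_1=Z\). Each of these lies in \(\mathcal H\): \(B_1\subsetneq Z\) is an element of \(\G\) of index \(<m\), hence \(B_1=B_1\cap Z\in\mathcal H\); and, for \(j\geq 2\), the relevant component of \(B_j\cap Z\) is, by well connectedness of \(\G\), either \(B_j\cap Z\) itself or an element \(G_b\subsetneq Z\) of \(\G\), and in both cases it belongs to \(\mathcal H\). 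Their intersection is \(B_1\cap\bigcap_{j\geq 2}(B_j\cap Z)=K\). The step that requires care — and which I expect to be the main obstacle — is to show that this family meets transversally inside \(Z\); I would do this by a tangent space count at a point \(y\in K\), using \(T_{B_1,y}\subseteq T_{Z,y}\), the transversality \(T_{B_j,y}+T_{Z,y}=T_{X,y}\) for \(j\geq 2\) proved above, the cleanness of the intersections, and the transversality of \(\{B_1,\dots ,B_h\}\) in \(X\): one checks that the codimensions in \(Z\) of the listed varieties add up to \(\mathrm{codim}_Z K\). The same transversality, together with a codimension comparison, shows that these are indeed the minimal elements of \(\mathcal H\) containing \(K\) (any element of \(\mathcal H\) containing \(K\) is a component of some \(G_i\cap Z\) with \(G_i\) containing a \(\G\)-factor \(B_l\) of \(K\), hence contains the component of \(B_l\cap Z\) through \(K\), and strict inclusions among the listed components are excluded by counting codimensions). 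Since the definition of building set is local, \(\mathcal H\) is building.

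Finally I would prove well connectedness. Given \(H_{i_1},\dots ,H_{i_t}\in\mathcal H\), write each \(H_{i_l}\) as a connected component of \(G_{a_l}\cap Z\) with \(a_l<m\); then every connected component of \(H_{i_1}\cap\cdots\cap H_{i_t}\) is a connected component of \(\left(\bigcap_l G_{a_l}\right)\cap G_m\). Applying the well connectedness of \(\G\) to the family \(\{G_{a_1},\dots ,G_{a_t},G_m\}\), this intersection is empty, or connected, or a union of connected components each belonging to \(\G\); in the last case each such component is properly contained in \(Z\) (two distinct components of a variety cannot contain one another), hence equals some \(G_b\) with \(b<m\), i.e.\ equals \(G_b\cap Z\in\mathcal H\). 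Therefore \(H_{i_1}\cap\cdots\cap H_{i_t}\) is empty, connected, or a union of components each in \(\mathcal H\), which is well connectedness. Passing from these local statements back to \(Z\) is immediate, again because both definitions are local (one uses the open cover \(\{U_i\cap Z\}\), with \(\{U_i\}\) as in Definition~\ref{defbuildinggenerale}).
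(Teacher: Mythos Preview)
Your argument for the building property is essentially the paper's, organised slightly differently: you first isolate the unique \(\G\)-factor of \(K\) contained in \(Z\) and show all the others meet \(Z\) transversally via Proposition~\ref{lemmaintersezione}, then do the explicit tangent-space codimension count for transversality inside \(Z\). The paper instead writes \(H=G_{j_1}\cap\cdots\cap G_{j_k}\), argues via an incidence set \(S\subset I\times J\) that each \(G_{j_s}\) lies under some \(G_{s_{i_t}}\) or \(Z\), and then says one ``easily sees'' the transversality; your version makes this step more explicit and is perhaps cleaner, but the content is the same.

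There is, however, a genuine framing error in your treatment of well-connectedness. You announce at the outset that both properties are local and fix an open \(U\) on which the arrangement is simple; at the end you write that passing from the local statements back to \(Z\) is immediate ``because both definitions are local''. This is false for well-connectedness: the condition concerns global connectedness of intersections, and on a \(U\) where the arrangement is simple every intersection is already connected, so the local check is vacuous. The paper makes exactly this point, inserting the parenthetical ``(this proof is not local)'' before its well-connectedness argument. Your actual argument for well-connectedness --- writing each \(H_{i_l}\) as a component of \(G_{a_l}\cap Z\), applying well-connectedness of \(\G\) to \(\{G_{a_1},\dots,G_{a_t},G_m\}\), and observing that a component \(G_b\subsetneq Z\) lies in \(\mathcal H\) --- is correct and identical to the paper's, but it must be read \emph{globally}, not on \(U\). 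Drop the claim that well-connectedness is local and run that paragraph in \(X\) rather than in \(U\); nothing else needs to change.
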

\begin{proof}
Let us prove that \(\mathcal H\) is building. By definition of building set,  it suffices  to prove this locally,  i.e. in \(U\cap Z\) for any of the open sets  \(U\) that appears in the definition of the building set \(\G\). So we  fix such an 
\(U\) and assume that $X=U$.

As before, we write for each $i=1,\ldots ,u$,    $H_i=G_{s_i}\cap Z$ with $G_{s_i}\in \mathcal G_{m-1}$.

Let \(H=H_{i_1}\cap\cdots \cap H_{i_\ell}\) be a nonempty intersection of elements of \(\mathcal H\).
Since \(H\) is also  an intersection of elements of \(G\) we can write
\[H=H_{i_1}\cap\cdots \cap  H_{i_\ell}=  G_{s_{i_1}}\cap\cdots \cap  G_{s_{i_\ell}}\cap Z=G_{j_1}\cap \cdots \cap G_{j_k},\]
where \(G_{j_1}, ..., G_{j_k}\) are the minimal elements of \(\G\) that contain \(H\) and their intersection is transversal in \(X\). 

Consider the set $I=\{s_{i_1},\ldots, s_{i_\ell},m\}$ and $J=\{j_1,\ldots , j_k\}$. In $I\times J$ we take the subset $S$ consisting of those pairs $(a,b)$ such that $G_a\supset G_b$. By eventually reordering the indices, we can assume that the projection of $S$ on the second factor equals $\{j_1,\ldots , j_{k'}\}$, for some $k'\leq k$. On the other hand,
by  minimality,  the projection of $S$ on the first factor is surjective and we can further assume that 
$Z\supset G_{j_{k'}}$.

We claim that $k=k'$. Indeed if $k'$ where less than $k$, 
$$H=H_{i_1}\cap\cdots \cap H_{i_\ell}=G_{j_1}\cap\cdots \cap G_{j_k'}$$
and the intersection $G_{j_1}\cap\cdots \cap G_{j_k}$ would not be transversal.

Let \(\beta_s\), for every  $1\leq s\leq  k$, be  such that \(H_{\beta_s}\in \mathcal H\) is the connected component of \(G_{j_s}\cap Z\) that contains \(H\).

Then we have 
$$ H_{\beta_1}\cap \cdots\cap H_{\beta_{k}}=H.$$
We set $d=k-1$ if $Z= G_{j_{k}}$, $d=k$ otherwise.
In both cases we then easily see that 
$H$ is the transversal intersection 
$ H_{\beta_1}\cap \cdots\cap H_{\beta_{d}}.$

Finally let us   observe that \(H_{\beta_1},H_{\beta_2},...,H_{\beta_{d}}\) are the minimal elements in \(\mathcal H\) containing \(H\). This is obvious if \(d=1\). 
If  \(d>1\),   assume by contradiction that there is  an element  \( H'\in \mathcal H\) and an index \(s\in \{1,...,d\}\) such that \(H\subseteq  H'\subsetneq H_{\beta_s}\).
The last inclusion implies that  
\[H'=G'\cap Z\subsetneq G_{j_s}\cap Z\]
for some $G'\in \mathcal G.$
From this in particular it follows that   \(Z\) is not contained in  \(G'\) and that  \(G_{j_s}\nsubseteq G'\). Now, since the elements \(G_{j_1}, ..., G_{j_k}\) are the minimal elements of \(\G\) that contain \(H\),   \(G_{j_\nu}\subseteq G'\) for some $1\leq \nu\leq k$. Since $Z$ is not contained in $G'$, $j_\nu\neq m$. 

Then we have \(H_{\beta_\nu}\subseteq G_{j_\nu}\cap Z\subseteq G'\cap Z=H'\).
But \(H'\subsetneq H_{\beta_s}\), so we deduce \(H_{\beta_\nu}\subsetneq H_{\beta_s}\) which is a contradiction, since we know that their intersection is transversal.

This completes the proof that \(\mathcal H\) is building. 

Let us now prove that  \(\mathcal H\) is well connected (this  proof  is not local).
First we observe that by definition the elements of \(\mathcal H\) are connected.
Then let \(H=H_{i_1}\cap\cdots \cap H_{i_t}\) be a nonempty intersection of elements of \(\mathcal H\).
Since \(H\)   is also an intersection of elements of \(\G\), by the well connectedness of \(\G\),  if  \(H\)   is not connected then it is the disjoint union of connected components that belong to  \(\G\). Let \(G_s\) be such a component: since it is contained in \(Z\) then \(s<m\) and \(G_s=G_s\cap Z\) belongs to \(\mathcal H\). This proves that all these connected components belong to \(\mathcal H\).

\end{proof}

\begin{rem}
 \label{remarkarrangiamentoindotto} In the proof of the proposition above we have shown that if $H=H_{i_1}\cap\cdots \cap H_{i_\ell}$ then $H$ is equal to the transversal intersection of \(H_{\beta_1},...,H_{\beta_d}\). In particular we have shown that, for every \(\gamma=1,...,d\), \(H_{\beta_\gamma}\) is a connected component of  \( G_{j_\gamma}\cap Z\)
and \(G_{j_\gamma}\) is included in some of the \(G_{s_{i_1}},...,G_{s_{i_\ell}}\). 
 With the chosen ordering of  $\mathcal H=\{H_1,\ldots , H_u\}$,  this implies that each one of the \(\beta_j\)'s is \(\leq max\{i_1,...,i_l\}\).  Therefore we have proven  that for  each  \(1\leq i\leq u\), the arrangement of subvarieties  \(\mathcal H_i=\{H_1,...,H_{i}\}\) in \(Z\) 
is building and well connected.\end{rem}

\begin {prop}\label{zetat} Let $1\leq s\leq m-1$ and let $1\leq i\leq u$ be such that $s_i\leq s<s_{i+1}$. Then the 
proper transform of $Z$ in $X_s$  equals $Z_i.$\end{prop}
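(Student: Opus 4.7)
The plan is to induct on $s$, letting $Z^{(s)}$ denote the proper transform of $Z$ in $X_s$ (with $Z^{(0)} = Z = Z_0$, under the convention $s_0 = 0$). Since the ordering of $\G$ refines inclusion and $Z = G_m$ sits last, no $G_j$ with $j < m$ contains $Z$; consequently no $\tilde G_s$ can contain $Z^{(s-1)}$, and at each step the standard behaviour of blow-ups gives
$$Z^{(s)} = Bl_{\tilde G_s \cap Z^{(s-1)}}\, Z^{(s-1)}.$$
Assuming by induction that $Z^{(s-1)} = Z_{i'}$ with $s_{i'} \leq s - 1 < s_{i'+1}$, the proof reduces to identifying $\tilde G_s \cap Z^{(s-1)}$ and comparing with the next step of the parallel wonderful construction $Z \to Z_1 \to \cdots \to Z_u$ prescribed by Theorem \ref{teo:listabuilding} applied to the well connected building set $\mathcal H$ on $Z$, which is available thanks to Proposition \ref{lemmarrangiamentoindotto} and Remark \ref{remarkarrangiamentoindotto}.

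I would then split the inductive step into two cases. First, if $s = s_{i'+1}$, set $i = i'+1$: then $H_i = G_s \cap Z$ is a connected clean intersection in $X$. Iterating parts (2), (3) and (4) of Lemma \ref{lemmali} at the intermediate blow-ups along $\tilde G_1, \ldots, \tilde G_{s-1}$, whose centres are either strictly contained in $H_i$, disjoint from it, or transverse to the pair, one shows that the strict transforms of $G_s$ and $Z$ continue to intersect cleanly and that their intersection in $X_{s-1}$ coincides with $\tilde H_i$, the dominant transform of $H_i$ inside $Z_{i-1} = Z^{(s-1)}$. The induced blow-up then produces exactly $Z_i$. Second, if $s_{i'} < s < s_{i'+1}$, then $s$ is not of the form $s_j$. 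If $G_s \cap Z = \emptyset$ the two strict transforms stay disjoint throughout the process and $\tilde G_s \cap Z^{(s-1)} = \emptyset$. Otherwise, well connectedness of $\G$ forces every connected component of $G_s \cap Z$ to equal some $H_j$ with $s_j < s$; Lemma \ref{lemmali}(1) applied at step $s_j$ separates the strict transforms of $G_s$ and $Z$ over that component, and parts (3) and (4) of the same lemma propagate the separation through the remaining steps up to $X_{s-1}$. Hence $\tilde G_s \cap Z^{(s-1)} = \emptyset$ and $Z^{(s)} = Z^{(s-1)} = Z_{i'}$, as required.

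The main technical obstacle is the bookkeeping needed to track these iterated strict transforms: at every intermediate blow-up $X_{t-1} \to X_t$ with $t < s$, one must verify that the triple formed by $\tilde G_t$, the current transform of $G_s$ and the current transform of $Z$ falls into one of the four configurations of Lemma \ref{lemmali}. This is where Proposition \ref{lemmaintersezione} and Corollary \ref{cortransversale} become crucial, since they guarantee that any intersection of building set elements is either again in the building set, empty, or transverse, which is exactly the hypothesis needed to feed Lemma \ref{lemmali} and close the case analysis.
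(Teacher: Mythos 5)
Your argument is correct and follows essentially the same route as the paper's own proof: induction along the blow-up sequence, splitting into the case $s=s_i$ (where one identifies $\tilde G_{s_i}\cap Z_{i-1}$ with $\tilde H_i$ via Lemma \ref{lemmali}, Corollary \ref{cortransversale} and minimality of the center, so the blow-up restricts to $Bl_{\tilde H_i}Z_{i-1}=Z_i$) and the case $s_i<s<s_{i+1}$ (where either $G_s\cap Z=\emptyset$ or, by well connectedness, its components lie in $\G$ with smaller index, have already been blown up, and Lemma \ref{lemmali}(1) separates the transforms). The bookkeeping you flag as the main obstacle is handled in the paper at the same level of brevity, via the inductive hypothesis together with Lemma \ref{lemmali} and Corollary \ref{cortransversale}.(1).
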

\begin{proof} We first treat the case in which $s_i<s<s_{i+1}$. In this case there are two possibilities
\begin{enumerate}\item $G_s\cap Z=\emptyset$.
\item $G_s\cap Z\neq \emptyset$ and each of its connected components lies in $\mathcal G$.

\end{enumerate}
In the first case there is nothing to prove.  In the second case, by assumption when we reach $X_{s-1}$ we have already blown up each of the connected components of $G_s\cap Z$. Since we know that the intersection $G_s\cap Z$ is clean, by Lemma \ref{lemmali}.(1) the  transforms of $Z$ and $G_s$ in $X_{h-1}$ have empty intersection and clearly also in this case there is nothing to prove.

If $s=s_i$ again we have two cases
\begin{enumerate} \item $G_{s_i}\subset Z$. 
\item The intersection $G_{s_i}\cap Z$ is transversal and does not lie in $\mathcal G$.
\end{enumerate}
Let us denote by  $\tilde H_i$ and $\tilde G_{s_i}$   the proper transforms of $  H_i$ and $  G_{s_i}$ in $X_{s_i-1}$

By Remark \ref{pippopippis}.2, $X_{s_i}$ is obtained from    $X_{s_i-1}$ by blowing  $\tilde G_{s_i}$ which is a minimal element in a suitable building set. 

Thus our statement in case (1) follows, using induction from the fact that
$Z_{i}=Bl_{\tilde G_{s_i}}Z_{i-1}=Bl_{\tilde H_i}Z_{i-1}$.

 In  case (2),  since $H_i=G_{s_i}\cap Z$, by induction  and  Lemma \ref{lemmali}  we deduce that $\tilde H_i=\tilde G_{s_i}\cap Z_{i-1}$.
 So by Corollary \ref{cortransversale}.(1), and the minimality of $\tilde G_{s_i}$ in a suitable building set, the intersection $\tilde H_i=\tilde G_{s_i}\cap Z_{i-1}$ is transversal 
 and the proper transform of $Z$ in $X_{s_i}$ equals $Bl_{\tilde H_i}Z_{i-1}=Z_i$ as desired. \end{proof}

\section{Recollections on the construction of  projective wonderful models of a toric arrangement}
\label{pipo}
We are now going to consider a special situation.
We consider  a $n$-dimensional algebraic torus $T$ over the complex numbers and we denote by   $X^*(T)$ its character group. 

Let us  take $V=hom_\Z(X^*(T),\R)=X_*(T)\otimes_\Z\R$,  $X_*(T)$ being the lattice $hom_\Z(X^*(T),\Z)$  of one parameter subgroups in $T$. 

Then, setting $V_{\mathbb C}=hom_\Z(X^*(T),\C)=X_*(T)\otimes_\Z\C$, we have a natural identification of $T$ with $V_\C/X_*(T)$ \and we may consider a $\chi\in X^*(T)$ as a  linear function on $V_\C$. From now on the corresponding character  $e^{2\pi i\chi}$ will be usually denoted by $x_\chi$.

Now, let   \(\A\)  be the toric arrangement  \(\A=\{\mathcal K_{\Gamma_1,\phi_1},...,\mathcal K_{\Gamma_r,\phi_r}\}\) in $T$ as defined in the Introduction, where the \(\Gamma_i\) are split direct summands of  $X^*(T)$ and the \(\phi_i\)'s are homomorphisms \(\phi_i\: : \Gamma_i\rightarrow \C^*\). 

Remark  that $\mathcal K_{\Gamma,\phi}$ is  a coset with respect to the torus 
$H=\cap_{\chi\in \Gamma}Ker(x_\chi)$.
Now we consider the subspace $V_\Gamma=\{v\in V|\, \langle\chi,v\rangle=0,\, \forall \chi\in \Gamma\}$. Notice that since $X^*(H)=X^*(T)/\Gamma$, $V_\Gamma$ is naturally isomorphic to $hom_\Z(X^*(H),\R)= X_*(H)\otimes_{\Z} \R$.

In \cite{DCG} (see Proposition 6.1)  it was shown how to construct a projective smooth   \(T\)-  embedding  \(X=X_\Delta\) whose fan \(\Delta\) in \(V\) has the following property. For every \(\Gamma_i\)   there is an integral  basis of $\Gamma_i$, $\chi_1,\ldots ,\chi_s,$ such that,   for every cone \(C\) of \(\Delta\)  with generators \(r_1,\ldots ,r_h\), up to replace  \(\chi_i\) with \(-\chi_i\) for some \(i\), 
%(we will write \(C=C(r_1,\ldots ,r_h)\))   
the pairings  
$\langle\chi_i,r_j\rangle$ are all \(\geq 0\)  or all \(\leq 0\).  The basis $\chi_1,\ldots \chi_s$ is called an {\em equal sign basis} for \(\Gamma_i\).

Moreover we remark that  \(\Delta\) can be chosen in such a way that for every layer $\mathcal K_{\Gamma,\phi}$, obtained as a connected component of the intersection of some of the layers in \(\A\), the lattice \(\Gamma\) has an equal sign basis. Given such a \(\Delta\), we will say that \(X=X_\Delta\) is a {\em good toric variety} for  \(\A\).

In such a toric variety  \(X\)  consider the closure  \(\overline {{\mathcal K}}_{\Gamma,\phi}\) of a layer. This  closure turns out to be a toric variety, whose explicit description is provided by the following result from \cite{DCG}. 

\begin{teo}[Proposition 3.1 and Theorem 3.1 in \cite{DCG}]\label{faneH} For every layer    \(\mathcal K_{\Gamma,\phi}\), let \(H\) be the corresponding subtorus and let $V_{\Gamma}=\{v\in V|\, \langle\chi,v\rangle=0,\, \forall \chi\in \Gamma\}$. Then, 
\begin{enumerate}[1.]
\item For every cone $C\in \Delta$, its relative interior is either entirely contained in $V_{\Gamma}$ or disjoint from $V_{\Gamma}$.
\item The collection of cones $C\in \Delta$ which are contained in $V_{\Gamma}$ is a smooth fan $\Delta_{H}$.

\item   \(\overline {{\mathcal K}}_{\Gamma,\phi}\) is a smooth $H$-variety whose fan is $\Delta_{H}$.
\item  Let $\mathcal O$ be a $T$ orbit in $X=X_\Delta$ and let $C_{\mathcal O}\in \Delta$ be the corresponding cone. Then 
\begin{enumerate}
\item If $C_{\mathcal O}$ is not contained in $ V_{\Gamma}$, $\overline {\mathcal O}\cap \overline {{\mathcal K}}_{\Gamma,\phi}=\emptyset$.
\item If $C_{\mathcal O}\subset V_{\Gamma}$, $ {\mathcal O}\cap \overline {{\mathcal K}}_{\Gamma,\phi}$ is the $H$ orbit in $\overline {{\mathcal K}}_{\Gamma,\phi}$ corresponding to 
$C_{\mathcal O}\in \Delta_{H}$. 
\end{enumerate} \end{enumerate}\end{teo}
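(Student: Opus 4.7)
My plan rests on the equal sign basis $\chi_1,\ldots ,\chi_s$ of $\Gamma$ afforded by the good-toric-variety hypothesis. For a cone $C\in\Delta$ with generators $r_1,\ldots ,r_h$, after replacing each $\chi_i$ by $-\chi_i$ if needed (harmless since $\phi(-\chi_i)=\phi(\chi_i)^{-1}\in\C^*$) I may assume $\langle\chi_i,r_j\rangle\geq 0$ for all $i,j$. For Part (1), a point $v=\sum_j\lambda_jr_j$ in the relative interior of $C$ (so $\lambda_j>0$) satisfies $\langle\chi_i,v\rangle=\sum_j\lambda_j\langle\chi_i,r_j\rangle\geq 0$, which vanishes for every $i$ exactly when $\langle\chi_i,r_j\rangle=0$ for all $i,j$, i.e.\ when every $r_j$ lies in $V_\Gamma$, i.e.\ when $C\subset V_\Gamma$. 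Part (2) then follows formally: stability under faces is inherited from $\Delta$; an intersection of two cones in $V_\Gamma$ stays in $V_\Gamma$ and is a common face by the fan axioms; and for smoothness, the splitness of $\Gamma\subset X^*(T)$ makes $X_*(H)=V_\Gamma\cap X_*(T)$ saturated in $X_*(T)$, so the generators of any cone of $\Delta_H$ (being part of a $\Z$-basis of $X_*(T)$ and lying in $X_*(H)$) generate a subgroup saturated both in $X_*(T)$ and in $X_*(H)$, hence extend to a $\Z$-basis of $X_*(H)$.

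For Parts (3) and (4) I would work chart by chart through the orbit--cone dictionary. Fixing $C'\in\Delta$ and choosing signs so that $\chi_i\in(C')^\vee$ for every $i$, the equations $x_{\chi_i}=\phi(\chi_i)$ become regular on $U_{C'}=\mathrm{Spec}\,\C[(C')^\vee\cap X^*(T)]$ and restrict to $\mathcal K_{\Gamma,\phi}$ on the dense torus. On the orbit $\mathcal O_{C'}\subset U_{C'}$ the character $x_{\chi_i}$ vanishes identically unless $\chi_i\in(C')^\perp$; since $\phi(\chi_i)\neq 0$, $\overline{\mathcal K}_{\Gamma,\phi}\cap\mathcal O_{C'}$ is non-empty only when every $\chi_i$ annihilates the generators of $C'$, i.e.\ $C'\subset V_\Gamma$, equivalently $C'\in\Delta_H$. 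This yields (4a): if $C_{\mathcal O}\not\subset V_\Gamma$, every cone having $C_{\mathcal O}$ as a face also fails to lie in $V_\Gamma$, so no orbit in $\overline{\mathcal O}$ can meet $\overline{\mathcal K}_{\Gamma,\phi}$. When $C'\in\Delta_H$ instead, $\Gamma\subset(C')^\perp\cap X^*(T)=X^*(\mathcal O_{C'})$ and the equations $x_{\chi_i}=\phi(\chi_i)$ pick out a coset of the subtorus image of $H$ in $\mathcal O_{C'}$, which is precisely the $H$-orbit of $X_{\Delta_H}$ attached to $C'$, giving (4b).

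For Part (3) the pieces glue as follows. Writing $\mathcal K_{\Gamma,\phi}=t_0H$ and using that $t_0\in T$ acts on $X$, I reduce to the case $\phi\equiv 1$. The previous paragraph shows that every point of $\overline H$ lies in some $U_C$ with $C\in\Delta_H$; on each such chart I would identify $\overline H\cap U_C$ with the scheme-theoretic image of $\C[C^\vee\cap X^*(T)]$ in $\C[X^*(H)]$, which equals $\C[C^\vee_H\cap X^*(H)]$ because $\Gamma$ lies in the lineality of $C^\vee$ whenever $C\subset V_\Gamma$. This is precisely the coordinate ring of the affine chart of $X_{\Delta_H}$ indexed by $C$, and gluing over $C\in\Delta_H$ identifies $\overline H$ with the smooth toric $H$-variety $X_{\Delta_H}$ from Part (2). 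The \emph{main obstacle} I expect lies in this last scheme-theoretic step: verifying that the closed subscheme cut out on $U_C$ by $x_{\chi_i}=\phi(\chi_i)$ is reduced and realises the scheme-theoretic closure, with no spurious boundary components. The equal sign normalization together with the lineality observation is exactly what rules this out, and it is where the good-toric-variety hypothesis on $X$ enters essentially.
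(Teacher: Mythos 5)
This theorem is not proved in the paper at all: it is recalled verbatim from \cite{DCG} (Proposition 3.1 and Theorem 3.1), so there is no internal proof to compare against; your chart-by-chart argument built on the equal-sign basis (which makes each $x_{\chi_i}$ regular on every chart $U_C$, gives (1) and (2) by the positivity argument and saturation, and gives (4) by the orbit--cone analysis, with (3) reduced to $\phi\equiv 1$ by translation) is correct and is essentially the strategy of the cited source. The one step you flag is settled exactly as you indicate: since every $\chi\in\Gamma$ vanishes on $C\subset V_\Gamma$, any lift to $X^*(T)$ of an element of $C_H^\vee\cap X^*(H)$ already pairs nonnegatively with $C$, so the image of the semigroup $C^\vee\cap X^*(T)$ in $X^*(H)$ is all of $C_H^\vee\cap X^*(H)$; hence $\overline H\cap U_C=\mathrm{Spec}\,\C[C_H^\vee\cap X^*(H)]$ (the closure being the Spec of the image of the restriction homomorphism), which rules out spurious boundary components and completes (3) and (4b).
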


Let us   denote  by \({\mathcal Q}'\) (resp.  \({\mathcal Q}\) )  the set whose elements are  the subvarieties  \(\overline {{\mathcal K}}_{\Gamma_i,\phi_i}\)   of \(X\) (resp. the subvarieties  \(\overline {{\mathcal K}}_{\Gamma_i,\phi_i}\)  and the irreducible components of the complement \(X- T\)). We then denote by \(\elle'\) (resp. \(\elle\)) the poset  made by all the connected components of all the intersections of some of the  elements of \({\mathcal Q}'\) (resp.  \({\mathcal Q}\) ).
In \cite{DCG} (Theorem 7.1) we have shown that the  family   \(\elle\) is an arrangement  of subvarieties in \(X\). As a consequence also \(\elle'\), being contained in $\elle$ and closed under intersection, is an arrangement  of subvarieties.

We notice that the complement in \(X\) of the union of the elements in \(\elle\) is equal to \(\emme(\A)\), and it is strictly contained in the complement of the union of the elements in \(\elle'\).

In the sequel of this paper we will focus on the wonderful model \(Y(X,\G)\) obtained by choosing a (well connected) building set \(\G\) for \(\elle '\).  Let us now explain  our choice.

As a consequence of  Theorem \ref{faneH} we deduce that the elements of $\mathcal L$ are exactly the non empty intersections 
$\overline{\mathcal K}_{\Gamma, \phi}\cap \overline {\mathcal O}\neq \emptyset.$ This means that they are indexed by a family of  triples $(\Gamma,\phi, C_{\mathcal O})$ with $\phi\in \hom(\Gamma,\mathbb C^*)$, and $C_{\mathcal O}\subset V_\Gamma$.  The triples $(\{0\},0, C_{\mathcal O})$ index the closures of $T$ orbits in $X$.

The intersection $$\overline{\mathcal K}_{\Gamma\phi}\cap \overline {\mathcal O}$$ is transversal. Furthermore, since $X$ is smooth, if the cone $C_{\mathcal O}=C(r_{i_1},\ldots r_{i_h})$, where the $r_{i_j}$ are the rays of $C_{\mathcal O}$, we have that $\overline {\mathcal O}$ is the transversal intersection of the divisors $D_{r_{i_{j}}}$. We deduce:

\begin{prop}\label{ilg+}
Let \(\G\) be a  building set for the arrangement of subvarieties \(\mathcal L'\) in \(X\).
Then  \(\G^+=\G \cup \{D_r\}_{r\in \mathcal R}\) is a  building set for \(\mathcal L\).
\end{prop}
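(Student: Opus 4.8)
The plan is to verify the local condition in Definition~\ref{defbuildinggenerale}: we must exhibit an open cover $\{W_i\}$ of $X$ such that $\mathcal L_{|W_i}$ is simple and $\G^+_{|W_i}$ is building for it. Since $\G$ is building for $\mathcal L'$, there is already an open cover $\{U_i\}$ with $\mathcal L'_{|U_i}$ simple and $\G_{|U_i}$ building. On the toric side, the cover of $X$ by the $T$-stable affine charts $X_\sigma$ (one for each maximal cone $\sigma\in\Delta$) has the property that on $X_\sigma$ only the divisors $D_r$ with $r$ a ray of $\sigma$ are nonempty, they meet transversally, and every orbit closure $\overline{\mathcal O}$ with $C_{\mathcal O}\subset\sigma$ is exactly the transversal intersection of those $D_r$. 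The common refinement $W_i = U_i\cap X_\sigma$ is the cover I would use. First I would check that $\mathcal L_{|W_i}$ is simple: cleanness of all pairwise intersections is inherited from $\mathcal L$ being an arrangement (Theorem~7.1 of \cite{DCG}), and the closure-under-intersection condition follows because an intersection of elements of $\mathcal L$ restricted to $W_i$ is, by Theorem~\ref{faneH}, a union of pieces of the form $\overline{\mathcal K}_{\Gamma,\phi}\cap\overline{\mathcal O}$, each of which lies in $\mathcal L$; one then restricts to $W_i$ and uses that on the affine chart these become connected (or empty).

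The core of the argument is the building property of $\G^+_{|W_i}$ for $\mathcal L_{|W_i}$. Fix an element $L\in\mathcal L$, $L\notin\G^+$, with $L\cap W_i\neq\emptyset$; by Theorem~\ref{faneH} we may write (on $W_i$) $L=\overline{\mathcal K}_{\Gamma,\phi}\cap\overline{\mathcal O}$ with $C_{\mathcal O}\subset V_\Gamma$, and $\overline{\mathcal O}=\bigcap_{j}D_{r_j}$ transversally, the $r_j$ being rays of $\sigma$. The claim is that the $\G^+$-factors of $L$ are precisely the $D_{r_j}$ together with the $\G$-factors of the connected component $K'$ of $\overline{\mathcal K}_{\Gamma,\phi}$ (restricted to $W_i$) containing $L$ — note $K'\in\mathcal L'$, so its $\G$-factors make sense. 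I would argue: (a) each $D_{r_j}$ is minimal among elements of $\G^+$ containing $L$ (a $D_r$ cannot contain a $\G$-factor, which is not a toric divisor, and $D_{r_j}$ is a divisor hence minimal); (b) each $\G$-factor $F$ of $K'$ contains $L$ and is minimal — if some $D_r\subsetneq F$ then $F$ would contain a toric divisor, impossible since $F\in\mathcal L'$ arises from layer closures; and no other element of $\G$ lies strictly between $L$ and $F$ by minimality of the $\G$-factors of $K'$; (c) these factors together intersect transversally with intersection exactly $L$: transversality of $\bigcap D_{r_j}$ with $\overline{\mathcal K}_{\Gamma,\phi}$ is the transversality statement recalled just before the proposition, transversality among the $\G$-factors of $K'$ holds since $\G$ is building, and these two transversalities combine because the tangent space of $\overline{\mathcal K}_{\Gamma,\phi}$ (equivalently of $V_\Gamma$) is complementary to that of $\overline{\mathcal O}$.

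The step I expect to be the main obstacle is (c), the transversality of the combined collection of $\G$-factors of $K'$ and the divisors $D_{r_j}$, and correspondingly the bookkeeping showing the intersection equals $L$ on the nose (not just set-theoretically but scheme-theoretically / at the level of tangent spaces). The delicate point is that $\overline{\mathcal K}_{\Gamma,\phi}$ and $\overline{\mathcal O}$ interact: one must show that intersecting the (transversal) collection of $\G$-factors cutting out $K'$ with the (transversal) collection $\{D_{r_j}\}$ cutting out $\overline{\mathcal O}$ remains transversal. I would handle this by passing to tangent spaces at a point $y\in L$: the tangent space $T_{K',y}\subset T_{\overline{\mathcal K}_{\Gamma,\phi},y}$ is cut inside $T_{X,y}$ by equations coming from the $\G$-factors, $T_{\overline{\mathcal O},y}$ is cut by the (linearly independent) differentials of the local equations of the $D_{r_j}$, and because $C_{\mathcal O}\subset V_\Gamma$ the normal directions of $\overline{\mathcal K}_{\Gamma,\phi}$ and of $\overline{\mathcal O}$ span independent subspaces of $T_{X,y}^*$ — this is exactly the content of the transversality $\overline{\mathcal K}_{\Gamma,\phi}\cap\overline{\mathcal O}$ recalled above, and it lets one conclude that the union of all the relevant conormal equations stays linearly independent, which is transversality. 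Everything else is an application of Proposition~\ref{lemmaintersezione}, Corollary~\ref{cortransversale} and the local-to-global bookkeeping already used repeatedly in Section~\ref{wellconnectedbuilding}.
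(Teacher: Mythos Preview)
Your approach is essentially the same as the paper's. The paper's proof is considerably terser: it invokes directly the two facts established just before the proposition (every $S\in\mathcal L$ is a transversal intersection $\overline{\mathcal K}_{\Gamma,\phi}\cap\bigcap_{r\in J}D_r$, and $\overline{\mathcal O}$ is a transversal intersection of the $D_r$'s), combines this with the transversal decomposition of $\overline{\mathcal K}_{\Gamma,\phi}$ into its $\G$-factors on a suitable open $U$, and asserts that the resulting collection gives the $\G^+$-factors of $S$; it then observes that connected components of intersections of elements of $\G^+$ lie in $\mathcal L$ by definition. Your refinement of the open cover by toric affine charts $X_\sigma$ and your tangent-space argument for step (c) are exactly the details underlying the paper's one-line ``the same holds for $S$ with respect to $\G^+$''.

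One small point worth tightening in your step (b): you write that no element of $\G$ lies strictly between $L$ and a $\G$-factor $F$ of $K'$ ``by minimality of the $\G$-factors of $K'$'', but minimality is with respect to containment of $K'$, not of $L$. What you actually need is that any $G\in\G$ containing $L$ must already contain $K'=\overline{\mathcal K}_{\Gamma,\phi}$; this holds because layer closures are irreducible and meet the open torus, so containment of the boundary stratum $L$ forces containment of the layer closure indexing it (equivalently, the triple $(\Gamma,\phi,C_{\mathcal O})$ determines $S$). The paper leaves this implicit as well, so you are not missing anything the paper supplies.
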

\begin{proof}

We have seen that an element of $S\in \mathcal L$ is the transversal intersection
$$S=\overline{\mathcal K}_{\Gamma,\phi}\cap \bigcap_{r\in J}D_{r},$$
with $J$ a, possibly empty, subset of $\mathcal R$

We know that, in a suitable open set $U$, $\overline{\mathcal K}_{\Gamma,\phi}$ is the transversal intersection of the minimal elements in $\mathcal G$ containing it. Since  \(\G^+=\G \cup \{D_r\}_{r\in \mathcal R}\), the same holds for $S$ with respect to \(\G^+\).

On the other hand  we observe that the connected components of any  intersection of elements of \(\G^+\) belong to  \(\mathcal L\), by the definition of \(\mathcal L\).

This clearly means that \(\mathcal L\) is the arrangement induced by  \(\G^+\) and that \(\G^+\) is a building set {\em for} \(\mathcal L\).
%Since \(\G\) is a building set for \(\mathcal L'\),
\end{proof}
As a consequence of the proposition above, we can construct   \(Y(X,\G^+)\), which  is a projective wonderful model for the complement \[\emme(\A)=X-\bigcup_{G\in \G^+}G=X-\bigcup_{A\in \elle}A.\]
Now we observe that the varieties \(Y(X,\G)\) and \(Y(X,\G^+)\) are isomorphic.

To prove this for instance one could order     \(\G^+\) in the following way: one puts first the elements of \(\G\) ordered in a way that refines inclusion, then the elements \(D_r\) in any order. As we know from Theorem \ref{teo:listabuilding}, \(Y(X,\G^+)\) can be obtained as the result of a series of blowups starting from \(X\). After the first \(|\G|\) steps we get \(Y(X,\G)\), then the centers of the last \(|\mathcal R|\)  blowups are divisors so \(Y(X,\G^+)\) is isomorphic to \(Y(X,\G)\).

To finish our  recollection  on projective models and toric varieties, we need to   describe explicitly   the restriction map in cohomology
$$j^*:H^*(X,\mathbb Z)\to H^*( \overline {{\mathcal K}}_{\Gamma,\phi},\mathbb Z),$$
induced by the inclusion, for a layer ${\mathcal K}_{\Gamma,\phi}$.

Let us first recall the following well known presentation of the cohomology ring of a smooth projective toric variety by generators and relations. 
Let \(\Sigma\) be a smooth complete fan and let \(X_\Sigma\) its associated toric variety.
%Fix   a torus $T$ and a  smooth complete fan $\Sigma\subset hom_\Z(X^*(T),\R)$. 
Take a one   dimensional face   in $\Sigma$. This face  contains a unique primitive ray $r\in hom_\Z(X^*(T),\Z)$. We denote by $\mathcal R$ the collection of rays. We have:
\begin{prop}(see for example \cite{fultontoric}, Section 5.2.)\label{danilov} $$H^*(X_\Sigma),\mathbb Z)=\mathbb Z[c_{r}]_{r\in \mathcal R}/L_\Sigma$$
where $L_\Sigma$ is the ideal generated by 
\begin{enumerate}
\item[a)] \(c_{r_1}c_{r_2}\cdots c_{r_k}\) if the rays \(r_1,...,r_k\) do not belong to a cone of \(\Sigma\).
\item[b)]  \(\sum_{r\in \mathcal R}\langle \beta, r\rangle  c_r \) for any \(\beta\in X^*(T)\).
\end{enumerate}
Furthermore the residue class of $c_r$ in $H^2(X_\Sigma,\mathbb Z)$ is the cohomology class of the divisor $D_r$ associated to the ray $r$ for each $r\in \mathcal R.$ By abuse of notation we are going to denote by $c_r$ its residue class in $H^2(X_\Sigma,\mathbb Z)$.
\end{prop}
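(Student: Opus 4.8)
The statement is the standard presentation of the cohomology ring of a smooth complete toric variety, so the plan is to give the classical argument rather than anything new; the point is only to fix notation and conventions (in particular the sign convention in relation (b), which matters for the rest of the paper). I would organize the proof around the structure of the torus orbits and the Chow ring, since for smooth complete toric varieties $H^*(X_\Sigma,\Z)$ agrees with the (operational) Chow ring and is concentrated in even degrees with no torsion. First I would recall that the classes $[D_r]$ of the torus-invariant prime divisors, one for each ray $r\in\mathcal R$, generate $H^*(X_\Sigma,\Z)$ as a ring: this follows because the open orbit is an affine torus with trivial cohomology, so by the localization/Gysin sequence for the stratification of $X_\Sigma$ by orbit closures, $H^*$ is generated by the classes of the orbit closures $V(\sigma)$, and each $V(\sigma)$ with $\sigma=C(r_{i_1},\dots,r_{i_k})$ is the transversal intersection $D_{r_{i_1}}\cap\dots\cap D_{r_{i_k}}$, hence $[V(\sigma)]=c_{r_{i_1}}\cdots c_{r_{i_k}}$. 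This gives a surjection $\Z[c_r]_{r\in\mathcal R}\twoheadrightarrow H^*(X_\Sigma,\Z)$.

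Next I would check that the relations (a) and (b) lie in the kernel. For (a): if $r_1,\dots,r_k$ do not span a cone of $\Sigma$, then $D_{r_1}\cap\dots\cap D_{r_k}=\emptyset$, so the product of their classes vanishes — this is immediate from the combinatorial description of intersections of invariant divisors, or from the fact that the normal fan structure forces $\bigcap V(r_j)=\emptyset$. For (b): each character $\beta\in X^*(T)$ gives a rational function $x_\beta$ on $X_\Sigma$ whose principal divisor is $\operatorname{div}(x_\beta)=\sum_{r\in\mathcal R}\langle\beta,r\rangle D_r$; since a principal divisor is rationally (hence homologically) equivalent to zero, $\sum_{r}\langle\beta,r\rangle c_r=0$ in $H^2(X_\Sigma,\Z)$. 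Thus we obtain a surjection $\Z[c_r]_{r\in\mathcal R}/L_\Sigma\twoheadrightarrow H^*(X_\Sigma,\Z)$, and it remains to see it is an isomorphism.

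For injectivity — which is really the only substantive point — I would argue by a dimension/rank count in each degree, following Danilov/Jurkiewicz. One shows that $\Z[c_r]/L_\Sigma$ is a free $\Z$-module whose rank in degree $2k$ equals the number $h_k$ determined by the $h$-vector of $\Sigma$ (equivalently, $\sum h_k t^k = \sum_{\sigma}(t-1)^{\dim X-\dim\sigma}$), using the fact that a maximal cone gives, via the linear relations (b), an explicit monomial basis indexed by faces; simultaneously $H^{2k}(X_\Sigma,\Z)$ has the same rank by the cellular/BB-type decomposition of $X_\Sigma$ into affine cells indexed by the cones. Since the surjection is a degreewise surjection of finitely generated free $\Z$-modules of equal rank, it is an isomorphism. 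The main obstacle is precisely this last rank-matching: it requires either invoking the known Betti numbers of smooth complete toric varieties together with a combinatorial-commutative-algebra computation of the Hilbert function of $\Z[c_r]/L_\Sigma$ (the Stanley–Reisner ring modulo a linear system of parameters), or citing it directly. Since the paper explicitly says ``see for example \cite{fultontoric}, Section 5.2,'' I would in fact simply refer to that account for the injectivity step and the identification of $c_r$ with $[D_r]$, and limit the written proof to recalling why the $c_r$ generate and why (a), (b) hold, so that our later sign conventions are unambiguous.
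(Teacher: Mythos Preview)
Your proposal is correct and follows the classical Danilov--Jurkiewicz argument. Note, however, that the paper does not actually prove this proposition: it is stated with a bare reference to \cite{fultontoric}, Section~5.2, and no proof is given in the text. So there is nothing to compare against; your sketch is essentially an expansion of what the cited source contains, and your closing remark---that one may simply refer to Fulton for injectivity and only spell out the generators and relations (a), (b) to fix sign conventions---matches exactly how the paper treats the result.
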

Let us consider as before a toric arrangement \(\A\) in a torus \(T\),  and a good toric variety \(X=X_\Delta\) for \(\A\).
We can apply the proposition above to both $X$ and the closure of a layer $\overline {{\mathcal K}}_{\Gamma,\phi}$.  Let us remark that by  Theorem \ref{faneH}.4, if $r\notin V_{\Gamma}$ then 
the divisor $D_r$ does not intersect $\overline {{\mathcal K}}_{\Gamma,\phi}$, while if $r\in V_{\Gamma}$ the divisor $D_r$   intersects $\overline {{\mathcal K}}_{\Gamma,\phi}$ in the divisor corresponding to $r$.
We deduce:
\begin{prop}\label{sopra} The restriction map 
$$j^*:H^*(X,\mathbb Z)\to H^*( \overline {{\mathcal K}}_{\Gamma,\phi},\mathbb Z).$$
is surjective  and its  kernel  $I$ is generated by the classes $c_r$ with   $r\in \mathcal R$ such that $r\notin V_{\Gamma}$. 
 \end{prop}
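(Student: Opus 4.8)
The statement combines two assertions: surjectivity of $j^*$ and an identification of its kernel. The plan is to deduce both from Proposition \ref{danilov} applied to the two toric varieties $X = X_\Delta$ and $\overline{\mathcal K}_{\Gamma,\phi}$, together with Theorem \ref{faneH}. First I would record the presentations: $H^*(X,\Z) = \Z[c_r]_{r\in\mathcal R}/L_\Delta$ indexed by the rays $r\in\mathcal R$ of $\Delta$, and $H^*(\overline{\mathcal K}_{\Gamma,\phi},\Z) = \Z[c_\rho]_{\rho}/L_{\Delta_H}$ indexed by the rays of the fan $\Delta_H$, which by Theorem \ref{faneH}.2 is precisely the subfan of those cones of $\Delta$ contained in $V_\Gamma$. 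In particular the rays of $\Delta_H$ are exactly the rays $r\in\mathcal R$ with $r\in V_\Gamma$.

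\textbf{Surjectivity.} The key point is that $j^*$ sends $c_r$ to the class of the divisor $D_r\cap \overline{\mathcal K}_{\Gamma,\phi}$ inside $\overline{\mathcal K}_{\Gamma,\phi}$. By Theorem \ref{faneH}.4, if $r\notin V_\Gamma$ then $D_r$ does not meet $\overline{\mathcal K}_{\Gamma,\phi}$, so $j^*(c_r)=0$; if $r\in V_\Gamma$, then $D_r\cap\overline{\mathcal K}_{\Gamma,\phi}$ is exactly the toric divisor $c_r$ of $\overline{\mathcal K}_{\Gamma,\phi}$ associated to the ray $r$ in $\Delta_H$. (Here I would note that the intersection is transversal, as already observed in the excerpt, so the pullback of the class is the class of the intersection.) Since the classes $\{c_r : r \in V_\Gamma\}$ generate $H^*(\overline{\mathcal K}_{\Gamma,\phi},\Z)$ as a ring by Proposition \ref{danilov}, and each is in the image of $j^*$, the map $j^*$ is surjective.

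\textbf{The kernel.} Let $I$ be the ideal of $H^*(X,\Z)$ generated by the classes $c_r$ with $r\in\mathcal R$, $r\notin V_\Gamma$. The computation above gives $I\subseteq \ker j^*$. For the reverse inclusion, consider the quotient $H^*(X,\Z)/I$. By the presentation of Proposition \ref{danilov}, $H^*(X,\Z)/I$ is $\Z[c_r]_{r\in\mathcal R}/(L_\Delta + I)$, and killing the generators $c_r$ with $r\notin V_\Gamma$ leaves the ring $\Z[c_r]_{r\in V_\Gamma}/\overline{L_\Delta}$, where $\overline{L_\Delta}$ is the image of $L_\Delta$. I would then check that $\overline{L_\Delta}$ is exactly $L_{\Delta_H}$: the monomial relations (a) of $L_\Delta$ involving only rays in $V_\Gamma$ become, after the reduction, the monomial relations of $\Delta_H$ — here one uses that a set of rays of $\Delta$ all lying in $V_\Gamma$ spans a cone of $\Delta$ iff it spans a cone of $\Delta_H$, which is immediate from the definition of $\Delta_H$ as a subfan; the monomial relations of $\Delta$ involving some ray outside $V_\Gamma$ are already in $I$; and the linear relations (b), $\sum_{r\in\mathcal R}\langle\beta,r\rangle c_r$ for $\beta\in X^*(T)$, reduce to $\sum_{r\in V_\Gamma}\langle\beta,r\rangle c_r$, which are exactly the linear relations of $L_{\Delta_H}$ — here one observes that restricting a character of $T$ to $H$ (recalling $X^*(H)=X^*(T)/\Gamma$ and that pairings with $\Gamma$ kill the rays in $V_\Gamma$) realizes all characters of $H$, and conversely every character of $H$ lifts to one of $T$. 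Hence $H^*(X,\Z)/I \cong H^*(\overline{\mathcal K}_{\Gamma,\phi},\Z)$, and this isomorphism is induced by $j^*$; therefore $\ker j^* = I$.

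\textbf{Main obstacle.} The routine part is the surjectivity; the delicate bookkeeping is in the kernel computation, specifically verifying that the reduction of the relation ideal $L_\Delta$ modulo $I$ produces exactly $L_{\Delta_H}$ and nothing more — i.e.\ that no extra relations are introduced and none are lost. This rests on the two combinatorial facts that the cones of $\Delta$ contained in $V_\Gamma$ form the subfan $\Delta_H$ (Theorem \ref{faneH}.2) and that the character lattice of $H$ is $X^*(T)/\Gamma$ with the rays of $\Delta_H$ pairing trivially against $\Gamma$. Once these are in hand, the identification $\ker j^* = I$ follows by comparing the two presentations directly.
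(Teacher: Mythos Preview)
Your argument is correct and follows exactly the approach the paper takes: apply Proposition~\ref{danilov} to both $X$ and $\overline{\mathcal K}_{\Gamma,\phi}$, use Theorem~\ref{faneH} to see that $j^*(c_r)$ is zero or the corresponding divisor class according as $r\notin V_\Gamma$ or $r\in V_\Gamma$, and compare presentations. The paper condenses all of this into the short remark preceding the proposition and the words ``We deduce''; you have simply spelled out the bookkeeping (matching the Stanley--Reisner and linear relations of $\Delta$ and $\Delta_H$) that the paper leaves implicit.
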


\section{A result of Keel and  Chern polynomials of closures of layers}
\label{Chern1}
Let us as before consider a toric arrangement $\mathcal A$ in the torus $T$.
As we recalled in Section \ref{pipo}, we can and will choose  $X=X_\Delta$ to be a good  toric variety associated to $\mathcal A$ and  take  the arrangement $\mathcal L'$ of subvarieties in $X$.

Let us fix  a well connected building set  \(\G=\{G_1,...,G_m\}\)  for  $\mathcal L'$,  ordered in such a way  that   if \(G_i\subsetneq  G_j\)  then \(i<j \).

Our goal is to describe   the cohomology ring \(H^*(Y(X,\G),\Z)\) by generators and relations. For this we are going to use the following result due to Keel.

Let \(Y\) be a smooth variety, and suppose that \(Z\) is a regularly embedded subvariety   of codimension \(d\)  (we denote by \(i\: : \: Z\rightarrow Y\) the inclusion).
Let \(Bl_Z(Y)\) be the blowup of \(Y\) along \(Z\), so we have a map \(\pi\: : \:Bl_Z(Y)\rightarrow Y\),  and let \(E=E_Z\) be the exceptional divisor.

\begin{teo}[Theorem 1 in the Appendix of \cite{keel}]

\label{teoKeel}
Suppose that the map \(i^*\: : \: H^*(Y)\rightarrow H^*(Z)\) is surjective with kernel \(J\), then \(H^*(Bl_ZY)\) is isomorphic to 
\[\frac{H^*(Y)[t]}{(P(t),t\cdot J)  } \] 
where \(P(t)\in H^*(Y)[t]\) is any polynomial whose constant term is \([Z]\) and whose restriction to \(H^*(Z)\) is the Chern polynomial of the normal bundle \(N=N_ZY\), that is to say
\[i^*(P(t))=t^d+t^{d-1}c_1(N)+\cdots+c_d(N)\]
This isomorphism is induced by \(\pi^*\: : \: H^*(Y)\rightarrow H^*(Bl_ZY)\) and by sending \(-t\) to \([E]\).

\end{teo}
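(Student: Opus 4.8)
The plan is to deduce Theorem~\ref{teoKeel} from the classical additive description of the cohomology of a blowup, which one then upgrades to a ring presentation. Write $E=\Proj(N)$ for the projectivized normal bundle, $g\colon E\to Z$ for the projection and $j\colon E\hookrightarrow Bl_ZY$ for the inclusion, so that $j^*\Oc(E)=\Oc_E(-1)$; put $\xi=c_1(\Oc_E(1))$, so that $j^*[E]=-\xi$ and $[E]\cdot y=j_*(j^*y)$ for every class $y$. By Leray--Hirsch one has $H^*(E)=\bigoplus_{k=0}^{d-1}g^*H^*(Z)\cdot\xi^k$, and comparing the Gysin sequences of $(Y,Z)$ and of $(Bl_ZY,E)$ through $\pi$ --- which restricts to an isomorphism $Bl_ZY\setminus E\xrightarrow{\sim}Y\setminus Z$ --- gives the blowup formula
\[
H^*(Bl_ZY)\;=\;\pi^*H^*(Y)\ \oplus\ \bigoplus_{k=1}^{d-1}j_*\bigl(g^*H^*(Z)\cdot\xi^{k-1}\bigr)
\]
as graded abelian groups. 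This step (and the computation of $H^*(E)$) uses nothing about the map $i^*$.

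First I would introduce the ring homomorphism $\phi\colon H^*(Y)[t]\to H^*(Bl_ZY)$ with $\phi|_{H^*(Y)}=\pi^*$ and $\phi(t)=-[E]$, and check that it is surjective. This is precisely where the surjectivity of $i^*$ is used: given $\gamma\in H^*(Z)$, choose $\alpha\in H^*(Y)$ with $i^*\alpha=\gamma$; then the projection formula yields $j_*\bigl(g^*\gamma\cdot\xi^{k-1}\bigr)=j_*\bigl(j^*(\pi^*\alpha\cdot(-[E])^{k-1})\bigr)=\pi^*\alpha\cdot(-[E])^{k-1}\cdot[E]=-\phi(\alpha t^{k})$, so every summand of the blowup formula lies in the image of $\phi$, and $\phi$ is onto.

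Next I would show $(P(t),\,t\cdot J)\subseteq\ker\phi$. The generators $t\beta$ with $\beta\in J=\ker i^*$ give no trouble: $\phi(t\beta)=-[E]\cdot\pi^*\beta=-j_*(j^*\pi^*\beta)=-j_*(g^*i^*\beta)=0$. For the key relation, write $P(t)=t^d+a_1t^{d-1}+\cdots+a_d$ with $a_d=[Z]$ and $i^*a_k=c_k(N)$, and let $\Fc=g^*N/\Oc_E(-1)$ be the excess bundle (of rank $d-1$), so that $c_{d-1}(\Fc)=\sum_{k=0}^{d-1}g^*c_k(N)\,\xi^{d-1-k}$. Using $j^*\pi^*a_k=g^*c_k(N)$, the identities $j^*[E]=-\xi$ and $[E]\cdot y=j_*(j^*y)$, and the self-intersection (``key'') formula $\pi^*i_*(1)=j_*\bigl(c_{d-1}(\Fc)\bigr)$ for the blowup square, a direct computation collapses $\phi(P(t))=P(-[E])$ to $\pi^*[Z]-j_*\bigl(c_{d-1}(\Fc)\bigr)=0$. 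I expect this to be the technical heart of the argument, and the main obstacle to be keeping all the sign and duality conventions coherent --- the chosen meaning of $\Proj(N)$ (sub-bundle versus quotient convention), the sign in $j^*[E]=-\xi$, and the exact shape of the self-intersection formula --- so that the Whitney formula for $c_{d-1}(\Fc)$ and the self-intersection formula dovetail with the right signs.

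Finally I would check that the induced surjection $\overline\phi\colon R:=H^*(Y)[t]/\bigl(P(t),\,t\cdot J\bigr)\to H^*(Bl_ZY)$ is an isomorphism. Since $P$ is monic of degree $d$, every class of $R$ is represented by a polynomial of $t$-degree $<d$; and since $t^{k}\cdot J=0$ for $k\ge 1$ while $i^*$ induces $H^*(Y)/J\cong H^*(Z)$, the coefficients of $t,\dots,t^{d-1}$ in such a representative may be taken modulo $J$. Hence there is a surjection $\psi$ onto $R$ from the graded group $G:=H^*(Y)\oplus\bigoplus_{k=1}^{d-1}H^*(Z)$ (with the $k$-th copy of $H^*(Z)$ regraded by $2k$), sending the $k$-th summand --- via a lift to $H^*(Y)$ --- to $t^{k}$-multiples. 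A direct computation with $[E]\cdot y=j_*(j^*y)$ and $j^*[E]=-\xi$ shows that, under the isomorphism $H^*(Bl_ZY)\cong G$ coming from the blowup formula, the composite $\overline\phi\circ\psi$ is the identity on the $H^*(Y)$-summand and multiplication by $-1$ on each $H^*(Z)$-summand, hence an automorphism of $G$. Therefore $\psi$ and $\overline\phi$ are both isomorphisms; unwinding the identifications shows that $\overline\phi$ is the map induced by $\pi^*$ and $-t\mapsto[E]$, which is the assertion of the theorem.
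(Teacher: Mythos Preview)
The paper does not give a proof of this statement: it is quoted verbatim as Theorem~1 in the Appendix of Keel's paper and used as a black box in the inductive computation of $H^*(Y(X,\G),\Z)$. So there is no ``paper's own proof'' to compare against.

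Your argument is the standard one and is essentially correct. The additive blowup formula together with Leray--Hirsch gives the target as a free $H^*(Z)$-module of the right shape; surjectivity of $i^*$ lets you hit every summand via $\phi$; the relation $t\cdot J\mapsto 0$ follows from $j^*\pi^*=g^*i^*$ and the projection formula; and $P(-[E])=0$ is exactly the excess/self-intersection identity $\pi^*[Z]=j_*\bigl(c_{d-1}(\Fc)\bigr)$ once you expand $c_{d-1}(\Fc)=\sum_{k=0}^{d-1}g^*c_k(N)\,\xi^{\,d-1-k}$. The final step---building the surjection $\psi\colon H^*(Y)\oplus\bigoplus_{k=1}^{d-1}H^*(Z)\twoheadrightarrow R$ and checking that $\overline\phi\circ\psi$ is (up to the sign $-1$ on each $H^*(Z)$-summand) the blowup isomorphism---is a clean way to avoid a separate injectivity argument. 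One small point worth making explicit: when you say ``the composite $\overline\phi\circ\psi$ is multiplication by $-1$ on each $H^*(Z)$-summand'', you are implicitly using that $\pi^*\alpha\cdot(-[E])^{k}$ lies \emph{entirely} in the $k$-th summand $j_*\bigl(g^*H^*(Z)\cdot\xi^{k-1}\bigr)$ of the blowup decomposition, with no $\pi^*H^*(Y)$-component; this follows from $[E]^k=j_*\bigl((-\xi)^{k-1}\bigr)$ and the projection formula, but it is worth a line since otherwise ``multiplication by $-1$'' is ambiguous.
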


%\subsection{The polynomials \(F(i,A)\)}
In order to use Theorem \ref{teoKeel}, we need to introduce certain polynomials with coefficient in $H^*(X,\mathbb Z)$.

For every  $G:=\overline{\mathcal K}_{\Gamma,\phi}\in \mathcal L'$,
% obtained as a connected component of the intersection of the closures of some of the layers in \(\A\),  
we set  \(\Lambda_G:=\Gamma\).
Setting $B=H^*(X,\mathbb Z)$ we choose a polynomial $P_G(t)=P_G^X(t)\in B[t]$  that satisfies the 
following two properties:
\begin {enumerate}
\item  $P_G(0)$ is the class dual to the class of  $ G$ in homology. 
\item the restriction map to  \(H^*(  {G},\mathbb Z)[t]\) sends $ P_G(t)$ to  the  Chern polynomial of  $N_{ G}X$.
\end{enumerate}
We will say that such a polynomial is a {\em good  lifting} of the Chern polynomial of  $N_{ G}X$.
Let $I$ be the kernel of the restriction map $$j^*:H^*(X,\mathbb Z)\to H^*(  {G},\mathbb Z).$$

\begin{lemma} 
\label{casounavariabile} The ideal  $(tI,  P_G(-t))\subset B[t]$ does not depend on the choice of   $P_G(t)$.
\end{lemma}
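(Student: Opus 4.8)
The plan is to show that any two good liftings $P_G(t)$ and $P_G'(t)$ of the Chern polynomial of $N_G X$ differ by something that is harmless modulo the ideal $(tI, P_G(-t))$. First I would observe that $P_G(t) - P_G'(t)$ lies in $\ker(j^*)[t] = I[t]$: indeed both polynomials restrict to the same Chern polynomial in $H^*(G,\mathbb{Z})[t]$, so their difference restricts to $0$, hence all its coefficients lie in $I = \ker j^*$. Write $P_G(t) - P_G'(t) = \sum_{i=0}^{d} a_i t^i$ with $a_i \in I$. For the terms with $i \geq 1$ we have $a_i t^i = (a_i t^{i-1}) \cdot t \in tI\cdot B[t]$, so these already lie in $(tI)$. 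The only term that is not obviously in the ideal is the constant term $a_0 \in I$; note however that $a_0 = P_G(0) - P_G'(0) = [G] - [G] = 0$ by property (1) of a good lifting. Hence in fact $P_G(t) - P_G'(t) = t\cdot(\text{element of } I[t]) \in (tI)$, and therefore $P_G'(-t) \equiv P_G(-t)$ and $P_G'(t) \equiv P_G(t)$ modulo $(tI)$.

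Consequently, replacing $P_G$ by $P_G'$ changes neither generator set of the ideal modulo the other: $(tI, P_G(-t)) = (tI, P_G'(-t))$, since $P_G(-t) - P_G'(-t) \in (tI)$ and the argument is symmetric in the two choices. This gives the claimed independence. Note that the argument uses property (1) in an essential way — without the normalization $P_G(0) = [G]$ the constant terms could differ by a nonzero element of $I$, and $(I, P_G(-t))$ would genuinely depend on the choice; it is precisely the combination of (1) and (2) that pins the ideal down.

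The only mildly delicate point, and the one I would state carefully, is that $\ker(j^* \otimes \mathrm{id}_{\mathbb{Z}[t]}) = (\ker j^*)[t]$, i.e. that a polynomial over $B$ restricts to zero in $H^*(G,\mathbb{Z})[t]$ exactly when each of its coefficients does; this is immediate since $j^*$ is applied coefficientwise. Everything else is the elementary manipulation above. Thus there is really no hard step: the content of the lemma is entirely in the bookkeeping of which generator of the ideal absorbs which term, and the key observation is that the potentially problematic constant term vanishes by the normalization condition (1) on good liftings.
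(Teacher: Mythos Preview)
Your proof is correct and follows essentially the same approach as the paper: use property (2) to see that all coefficients of the difference lie in $I$, use property (1) to see that the constant term of the difference vanishes, and conclude that $P_G(-t)-P_G'(-t)\in (tI)$. The paper's argument is identical in content, just stated more tersely.
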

\begin{proof} 
Let  $ Q_G(t)$  be another polynomial satisfying  (1) e (2). From  (1) we know that   $P_G(t)- Q_G(t)$ has constant term equal to 0. Moreover from (2) we deduce that  every coefficient of $P_G(t)- Q_G(t)$ belongs to  $I$ so $P_G(t)- Q_G(t)\in (tI)$. 
\end{proof}

Let us now consider two elements \(G,M\in \mathcal L'\), with   $G\subset M$.
%  that are  connected components of  intersections of layers in \(\A\). 
Let us choose a polynomial   $\overline P_G^M(t)\in H^*( M,\mathbb Z)[t]$ that is a good lifting of the Chern polynomial of   $N_{ G} M$ (i.e. it satisfies the properties (1) and (2) in $H^*(  M,\mathbb Z)$) and let us denote by $P_G^M(t)$ a lifting of $\overline P_G^M(t)$ to  \(H^*(X,\mathbb Z)[t]\). The existence of such polynomial follows immediately from Proposition \ref{sopra}.

Let us now consider  a well connected building set $\mathcal G=\{G_1,\ldots ,G_m\}$ for the arrangement of subvarieties \(\elle'\) in  \(X\) (see Section \ref{pipo}),  ordered in a way that refines inclusion. 

Now, for every pair  $(i,A)$ with   \(i\in \{1,...,m\}\), and    \(A\subset \{1,...,m\}\) such that if \(j\in A\) then \( G_i\subsetneq G_j\),   we can define  the following polynomial  in  $H^*(X,\mathbb Z)[t_1,\ldots ,t_m]= B[t_1,\ldots ,t_m]$. 

Let us  consider the set  \(B_i=\{h\: | \: G_h\subseteq G_i\}\),
and let us denote by \(M\) the unique connected component of  \(\bigcap_{j\in A}G_j \)  that contains  $G_i$ (if   $A=\emptyset$ we put  $M=X$). Then, after choosing   all the polynomials $P^M_{G_i}$ as explained before, we put:
  
\[ F(i,A)=P^M_{G_i}(\sum_{h\in B_i}-t_{h})\prod_{j\in A}t_{j}. \]

We also include as  special cases the pairs \((0,A)\)  where 
%$G_0=\emptyset$  and 
$A$ is such that  \(\bigcap_{j\in A}G_j=\emptyset \), and we define the polynomials:  
\[ F(0,A)=\prod_{j\in A}t_{j}.\]

\begin{prop}
\label{propindependent}
Let  $I_m$ be the ideal in  $B[t_1,\ldots ,t_m]$  generated by  \begin{enumerate}[1.]\item the products 
$t_ic_r$ for every  ray $r\in \Delta $  that does not belong to \(V_{\Lambda_{G_i}}\) (i.e. \(\langle r, \cdot \rangle\) does not vanish on   $\Lambda_{G_i}$); 

\item the polynomials   $F(i,A)$ defined above.
 \end{enumerate}
Then  $I_m$ does not depend on the choice of the polynomials  $P_{G_i}^M$.
\end{prop}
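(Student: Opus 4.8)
The plan is to follow the mechanism of the one--variable Lemma \ref{casounavariabile}. Let $\{P^M_{G_i}\}$ and $\{Q^M_{G_i}\}$ be two systems of choices of the polynomials, and let $I_m^P,\,I_m^Q\subset B[t_1,\ldots ,t_m]$ be the corresponding ideals, with generators $F^P(i,A)$ and $F^Q(i,A)$. The generators of the first kind (the products $t_ic_r$) and the polynomials $F(0,A)$ do not involve the choice, so they are common to $I_m^P$ and $I_m^Q$. Hence it suffices to prove that $F^P(i,A)\in I_m^Q$ for every pair $(i,A)$ with $1\leq i\leq m$ (and $A$ as in the definition); this gives $I_m^P\subseteq I_m^Q$, and then $I_m^P=I_m^Q$ by symmetry. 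As $F^Q(i,A)\in I_m^Q$ already, I will show $F^P(i,A)-F^Q(i,A)\in I_m^Q$. Fix such a pair, let $M$ be the connected component of $\bigcap_{j\in A}G_j$ containing $G_i$ (with $M=X$ when $A=\emptyset$), put $B_i=\{h:G_h\subseteq G_i\}$ and $\sigma=-\sum_{h\in B_i}t_h$, and set $p=P^M_{G_i}(t)$, $q=Q^M_{G_i}(t)\in B[t]$, so that
\[
F^P(i,A)-F^Q(i,A)=\bigl(p(\sigma)-q(\sigma)\bigr)\prod_{j\in A}t_j .
\]

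Next I would record where the coefficients of $p-q$ live. For $l\geq 1$ the coefficient of $t^{l}$ in both $p$ and $q$ restricts, in $H^*(G_i,\Z)$, to the degree $d-l$ part of $c(N_{G_i}M)$, so their difference lies in $\ker\bigl(H^*(X,\Z)\to H^*(G_i,\Z)\bigr)$, which by Proposition \ref{sopra} equals the ideal generated by the classes $c_r$ with $r\notin V_{\Lambda_{G_i}}$. The constant terms $p(0)$ and $q(0)$ both restrict, in $H^*(M,\Z)$, to the class dual to $G_i$; hence $p(0)-q(0)$ lies in $\ker\bigl(H^*(X,\Z)\to H^*(M,\Z)\bigr)$, which by Proposition \ref{sopra} (applied to the layer $M$) is generated by the classes $c_r$ with $r\notin V_{\Lambda_{M}}$.

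Then I would decompose $p(\sigma)-q(\sigma)=\bigl[\,p(\sigma)-q(\sigma)-(p(0)-q(0))\,\bigr]+\bigl(p(0)-q(0)\bigr)$ and handle the two pieces, after multiplication by $\prod_{j\in A}t_j$, inside the ideal generated by the first--kind relations. Expanding in the variables $t_h$, the bracket is a sum of terms each of the shape (an element of $(c_r:r\notin V_{\Lambda_{G_i}})$) times $\sigma^{l}$ with $l\geq 1$; every monomial in such a $\sigma^l$ is divisible by some $t_h$ with $h\in B_i$, and since $G_h\subseteq G_i$ yields $V_{\Lambda_{G_h}}\subseteq V_{\Lambda_{G_i}}$, each $r$ that occurs satisfies $r\notin V_{\Lambda_{G_h}}$, so the term is divisible by the first--kind generator $t_hc_r$. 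For the remaining piece, write $p(0)-q(0)=\sum_{r\notin V_{\Lambda_{M}}}c_rb_r$ with $b_r\in B$; since $M$ is a connected component of $\bigcap_{j\in A}G_j$ one has $V_{\Lambda_{M}}=\bigcap_{j\in A}V_{\Lambda_{G_j}}$, so for every $r$ occurring here there is $j\in A$ with $r\notin V_{\Lambda_{G_j}}$, and then $c_rb_r\prod_{j'\in A}t_{j'}$ is divisible by $t_jc_r$. Combining the two pieces gives $F^P(i,A)-F^Q(i,A)\in I_m^Q$, as required.

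The step I expect to be the real obstacle is the identity $V_{\Lambda_{M}}=\bigcap_{j\in A}V_{\Lambda_{G_j}}$ --- equivalently, that a ray whose toric divisor misses $M$ already misses some $G_j$ with $j\in A$ --- and, on the same footing, the applicability of Proposition \ref{sopra} to $M$ and to $G_i$. These points need the toric geometry of the good toric variety $X$: that every member of $\elle'$ is the closure of a layer $\mathcal K_{\Gamma,\phi}$, that the intersection of such closures is again the closure of the layer whose lattice is the saturation of the sum of the lattices at hand, and that $V_{\Gamma}$ depends only on the rational span of $\Gamma$; all of this rests on Theorem \ref{faneH} and on the description of $\elle$ and $\elle'$ recalled in Section \ref{pipo}. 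Once this dictionary between $\Delta$ and the relevant cohomology kernels is in place, together with the elementary monotonicity $V_{\Lambda_{G_h}}\subseteq V_{\Lambda_{G_i}}$ whenever $G_h\subseteq G_i$, the computation above goes through.
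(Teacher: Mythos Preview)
Your argument is correct and follows the same mechanism as the paper's proof: split $p-q$ into its constant term, which lies in $\ker(B\to H^*(M))$ and is absorbed by some $t_j$ with $j\in A$ via the finite-index identity $V_{\Lambda_M}=\bigcap_{j\in A}V_{\Lambda_{G_j}}$, and its higher-degree part, which lies in $\ker(B\to H^*(G_i))$ and is absorbed by some $t_h$ with $h\in B_i$ via $V_{\Lambda_{G_h}}\subseteq V_{\Lambda_{G_i}}$. The only difference is organizational: the paper packages this computation as an induction on $m$, treating separately the cases $i=m$ and $m\in A$, whereas you handle all pairs $(i,A)$ uniformly; your direct version is arguably cleaner, since the induction does no essential work beyond invoking Lemma~\ref{casounavariabile} and observing that the type~(1) generators are choice-independent.
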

\begin{proof} We will prove the statement by induction on \(m\). 
We notice that if   $m=1$ the statement is true by the Lemma \ref{casounavariabile}   (the ideal $I_1$ coincides with the ideal \(I\) in  the lemma).

Let then  \(m\geq 2\) and let us consider the ideal $I_{m-1}\subset B[t_1,\ldots ,t_{m-1}]$ which by the inductive hypothesis does not depend on the choice of the polynomials $P_{G_i}^M$'s (where \(i<m\)). We will denote by \(I'_{m-1}\) its extension to   $B[t_1,\ldots ,t_m]$.

The polynomials   $F(i,A)$ belong to  $I'_{m-1}$ unless  $m\in A$ or $i=m$.
In the latter case  $A=\emptyset$ and  the same proof as in Lemma \ref{casounavariabile} implies that the ideal does not depend on the choice of  the polynomial \(P_{G_m}\).

In the first case ($m\in A$), if we consider two liftings 
$P^M_{G_i}$ and  $Q^M_{G_i}$ we notice that the restriction  of their difference $P^M_{G_i}-Q^M_{G_i}$  to  $H^*( M)[t]$ has constant term equal to 0, while the restriction to $H^*( G_i)[t]$ is 0. 

Let  $z$ be equal to $P^M_{G_i}(0)-Q^M_{G_i}(0)$. By construction  $z$ belongs to the ideal generated by the   $c_r$'s such that  $r$ does  not belong to \(V_{\Lambda_{M}}\), that is to say, \(\langle r, \cdot \rangle\) does not vanish on \(\Lambda_M\).  Now we observe that the lattice  $\Gamma= \sum_{j\in A} \Lambda_{G_j}$ has finite index  in $\Lambda_M$. If \(\langle r, \cdot \rangle\) vanished on   $\Lambda_{G_j}$ for every  $j\in A$ then it would vanish on   $\Gamma $  and therefore  on $\Lambda_M$. 

It follows that  if $r$  does  not belong to \(V_{\Lambda_{M}}\) then it exists  $j\in A$ such that  $r$ does  not belong to \(V_{\Lambda_{G_j}}\). 
This implies   that $\prod_{j\in A}t_{j}z$ belongs to the ideal generated by the monomials  in  (1) . To conclude it is sufficient to notice that the coefficients of 
$P^M_{G_i}(\sum_{h\in B_i}-t_{h})-Q^M_{G_i}(\sum_{h\in B_i}-t_{h})-z$ belong to the ideal generated by the $c_r$'s such that  $r \notin V_{\Lambda_{G_i}}$, and therefore, for the same reasoning as above, 
$P^M_{G_i}(\sum_{h\in B_i}-t_{h})-Q^M_{G_i}(\sum_{h\in B_i}-t_{h})-z$ belongs to \(I'_{m-1}\).

\end{proof}

\section{Presentation of the cohomology ring}
\label{seccohomology}

Let us  consider a toric arrangement $\mathcal A$ in the torus $T$.
As recalled in Section \ref{pipo}, let $X=X_\Delta$ be a good  toric variety associated to the chosen toric arrangement, and let us consider the arrangement $\mathcal L'$ of subvarieties in $X$.

Fix now a well connected building set  \(\G=\{G_1,...,G_m\}\)  for  $\mathcal L'$,  ordered in such a way  that   if \(G_i\subsetneq  G_j\)  then \(i<j \).

Our goal is to describe   the cohomology ring \(H^*(Y(X,\G),\Z)\) by generators and relations.
For any pair  \((G, M)\in \mathcal L'\times \mathcal L'\) with    $G\subset M$,   we fix a polynomial  \(P_G^M\in H^*(X,\Z)[t]=B[t]\) as explained in the preceding section. We also fix the polynomials \(P_G^X\in B[t]\).  This means in particular that we have fixed a choice  for the polynomials \(F(i,A)\in B[t_1,...,t_m]\).
Then we can state our main theorem:

\begin{teo}
\label{teopresentazionecoomologia}
The cohomology ring \(H^*(Y(X,\G), \Z)\) is isomorphic to the polynomial ring  $B[t_1,\ldots ,t_m]$  modulo the ideal $J_m$ generated by the following elements:
\begin{enumerate}[1.]
\item The products  $t_ic_{r}$, with   \(i\in \{1,...,m\}\),  for every  ray $r\in \mathcal R $  that does not belong to \(V_{\Lambda_{G_i}}\).

\item The polynomials \(F(i,A)\), for every pair  $(i,A)$ with   \(i\in \{1,...,m\}\) and    \(A\subset \{1,...,m\}\) such that if \(j\in A\) then \( G_i\subsetneq G_j\), and  for the pairs \((0,A)\) where  $A$ is such that  \(\bigcap_{j\in A}G_j=\emptyset \).
\end{enumerate}
The isomorphism is given by sending, for every \(i=1,...,m\), \(t_i\) to the pull back under the projection $\pi_i:Y(X,\G)\to X_i=Bl_{\tilde G_i}X_{i-1}$ of the  class of the exceptional divisor in $X_i$.
\end{teo}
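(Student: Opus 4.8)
The plan is to prove the theorem by induction on $m$, using Theorem \ref{teo:listabuilding} to realize $Y(X,\G)=Y(X,\G_m)$ as the blowup $Bl_{\tilde G_m}X_{m-1}$, and Keel's Theorem \ref{teoKeel} to compute its cohomology in terms of $H^*(X_{m-1},\Z)=H^*(Y(X,\G_{m-1}),\Z)$. The base case $m=0$ is Proposition \ref{danilov}; the case $m=1$ is essentially Lemma \ref{casounavariabile} together with Keel. For the inductive step I would set $Z=G_m$, let $\mathcal H=\{H_1,\ldots,H_u\}$ be the induced well connected building set on $Z$ produced in Proposition \ref{lemmarrangiamentoindotto}, and invoke Proposition \ref{zetat} to identify $\tilde G_m$ inside $X_{m-1}$ with $Z_u=Y(Z,\mathcal H)$. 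By the inductive hypothesis (applied to the ambient $X_{m-1}$, or more precisely by running the same presentation one dimension down) the cohomology of $Y(Z,\mathcal H)$ has an analogous presentation, which is what will let me verify Keel's surjectivity hypothesis.

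\textbf{Key steps.} First I would check that the restriction map $i^*:H^*(X_{m-1},\Z)\to H^*(\tilde G_m,\Z)$ is surjective. This is the technical heart: by induction $H^*(X_{m-1},\Z)$ is the quotient of $B[t_1,\ldots,t_{m-1}]$ by $J_{m-1}$, while $H^*(\tilde G_m,\Z)=H^*(Y(Z,\mathcal H),\Z)$ is a quotient of $H^*(Z,\Z)[s_1,\ldots,s_u]$; I need to see that $i^*$ sends $c_r\mapsto c_r|_Z$ (surjective onto $H^*(Z,\Z)$ by Proposition \ref{sopra}) and sends the variables $t_h$ (for those $G_h$ with $G_h\cap Z\neq\emptyset$, nonempty, whose relevant connected component is $H_{\beta}$) onto the corresponding exceptional classes $s_\beta$, while killing the $t_h$ with $G_h\cap Z=\emptyset$. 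The disconnected-intersection bookkeeping from Remark \ref{remconnected} and Proposition \ref{lemmarrangiamentoindotto} guarantees each nonempty $G_h\cap Z$ is connected, so this matching is unambiguous. Granting surjectivity, I identify the kernel $J$ of $i^*$, compute a good lifting $P(t_m)$ of the Chern polynomial of $N_{\tilde G_m}X_{m-1}$ — here I would argue that $F(m,\emptyset)=P^X_{G_m}(\sum_{h\in B_m}-t_h)$ reduces, modulo $J_{m-1}$, to such a lifting, using that the normal bundle of $\tilde G_m$ in $X_{m-1}$ differs from that of $G_m$ in $X$ by the blowup corrections encoded in the classes $t_h$, $h\in B_m$ — and then apply Keel to get $H^*(Y(X,\G),\Z)\cong H^*(X_{m-1},\Z)[t_m]/(P(-t_m),\, t_m\cdot J)$. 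Finally I would match this presentation term by term with $B[t_1,\ldots,t_m]/J_m$: the relations $t_m c_r$ for $r\notin V_{\Lambda_{G_m}}$ and $F(m,\emptyset)$ come from $P(-t_m)$; the relations $F(i,A)$ with $m\in A$ come exactly from $t_m\cdot J$, where one must verify that $J$ (the kernel of $i^*$) is generated precisely by the images of the $F(i,A')$ with $A'=A\setminus\{m\}$ together with the $c_r$ with $r\notin V_{\Lambda_{G_m}}$ — this is where Proposition \ref{propindependent} and the explicit description of $H^*(Y(Z,\mathcal H),\Z)$ enter. The remaining relations $F(i,A)$ with $m\notin A$, $i\neq m$, are inherited from $J_{m-1}$.

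\textbf{Main obstacle.} I expect the decisive difficulty to be the identification of the kernel $J=\ker(i^*)$ with the "expected" ideal, i.e. proving that the relations cutting out $H^*(\tilde G_m,\Z)$ inside $H^*(X_{m-1},\Z)$ are exactly (the classes of) the generators of $J_m$ that involve $t_m$. This requires two things simultaneously: that the inductive presentation of $H^*(Y(Z,\mathcal H),\Z)$ in the variables $s_\beta$ matches, under the substitution $s_\beta\leftrightarrow t_h$, the $F(i,A)$-relations with $m\in A$ — which relies on Remark \ref{remarkarrangiamentoindotto} (the $H_{\beta}$ are connected components of $G_{j}\cap Z$ with $G_j$ inside some $G_{s_i}$) and on compatibility of the chosen good liftings $P^M_{G_i}$ under restriction to $Z$; and that no extra relations appear, which is a statement about the flatness/surjectivity already built into Keel's lemma. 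A secondary, purely algebraic annoyance will be checking that $F(m,\emptyset)$, as a concrete polynomial in $B[t_1,\ldots,t_m]$, is genuinely a \emph{good lifting} in the sense of Keel (constant term dual to $[\tilde G_m]$, restriction equal to the Chern polynomial of the normal bundle) rather than merely congruent to one modulo $J_{m-1}$; this is handled by the independence statements (Lemma \ref{casounavariabile}, Proposition \ref{propindependent}) but needs to be spelled out carefully, in particular tracking how the Chern polynomial of $N_{G_m}X$ transforms under the sequence of blowups recorded by $B_m$.
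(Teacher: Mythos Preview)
Your strategy matches the paper's proof closely: induction on $m$, identify $\tilde G_m$ with $Z_u=Y(Z,\mathcal H)$ via Proposition \ref{zetat}, apply the inductive hypothesis to both $X_{m-1}$ and $Z_u$, and use Keel's theorem to glue. The paper spends most of its effort on exactly the obstacle you isolate, proving the two inclusions $J_m\subset L$ and $L\subset J_m$ by running through an explicit list of five types of generators for $\ker(q\circ f)$.

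There is, however, one concrete error in your description of the restriction map. You invoke Remark \ref{remconnected} to assert that each nonempty $G_h\cap Z$ is connected, but that remark applies only when the element is \emph{minimal} in $\G$, whereas $Z=G_m$ is \emph{maximal}. In fact $G_h\cap Z$ may well be disconnected; well-connectedness then forces each component to lie in $\G$, hence to have index $<h$, so these components have already been blown up before stage $h$ and the proper transforms of $G_h$ and $Z$ are disjoint in $X_{h-1}$ (this is case (2) in the proof of Proposition \ref{zetat}). The correct formula is $j^*(t_j)=z_i$ when $j=s_i$ (the minimal index with $H_i=G_{s_i}\cap Z$) and $j^*(t_j)=0$ otherwise; in particular indices $j$ with $G_j\cap Z$ nonempty but $j\neq s_i$ for any $i$ go to zero, not to some $z_\beta$. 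This changes the bookkeeping when you compute $\ker j^*$ and match it against the $F(i,A)$ with $m\in A$.

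Two smaller points you should make explicit: to apply the inductive hypothesis to $Z_u$ you need that $Z=\overline{\mathcal K}_{\Gamma,\phi}$ is itself a good toric variety for the induced arrangement $\mathcal A_H$ on the subtorus $H$, with $\mathcal H$ well connected of cardinality $u\leq m-1$ (the paper checks this); and your ``secondary annoyance'' about $F(m,\emptyset)$ being a genuine good lifting of the Chern polynomial of $N_{\tilde G_m}X_{m-1}$ is disposed of in the paper by a short lemma on how normal bundles transform under blowup (their Lemma \ref{remfultonintersection}), yielding $P_{Z_u}(-t_m)=P_Z\bigl(-\sum_{h\in B_m}t_h\bigr)$ directly.
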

\begin{proof}As a preliminary remark, let us observe  that the ideal generated by the relations in the statement of the theorem, according to Proposition \ref{propindependent},  does not depend on the choice of the polynomials \(F(i,A)\). In this proof we will use the following notation: if a polynomial \(g\) is another choice for \(F(i,A)\) we will write \(g\sim F(i,A)\).

The proof of the theorem is by induction on the cardinality \(m\) of \(\G\). The case when \(m=0\) is obvious.

Let us now suppose that the statement of the theorem  is true for any  projective model   constructed starting from a toric arrangement \(\A'\) in a torus \(T'\), and then choosing a good toric variety for \(\A'\) and a well connected building set with cardinality \(\leq m-1\). 

 In particular it is true for the  the variety \(Y(X,\G_{m-1})\). Let us use the notation of Section \ref{wellconnectedbuilding} and in particular set \(Y(X,\G_{m-1})=X_{m-1}\) and $Z=G_m$.  Now, in order to get \(Y(X,\G)\) we have to blowup \(X_{m-1}\) along the proper transform of \(Z\) which by Proposition \ref {zetat} is equal to  \(Z_u\).

Since \(\G\) is a building set for $\mathcal L'$, we know that  \(Z\) is the closure of a layer \({\mathcal K}_{\Gamma,\phi}\subset T\),  which  is  a coset with respect to the subtorus 
$H=\cap_{\chi\in \Gamma}Ker(x_\chi)$ of \(T\). Up to translation, we identify \({\mathcal K}_{\Gamma,\phi}\subset T\) with $H$.

Under this identification we get the  arrangement \(\A_H\) in \(H\), given by the  connected components of the intersections \( A\cap {\mathcal K}_{\Gamma,\phi}\) for every \(A\in \A\). Notice that \(X^*(H)=X^*(T)/\Gamma\).

We know that $Z$ is the $H$-variety associated to the fan \(\Delta_H\), consisting of those cones in $\Delta$ which lie in $V_{\Lambda_Z}$. From this it is immediate to check that $Z$ is    a good toric variety for \(\A_H\). If we  denote by $\mathcal L'_H$ its corresponding arrangement of subvarieties,  we also have, by Proposition  \ref{lemmarrangiamentoindotto}, that  \(\mathcal H\) is a well connected building set for $\mathcal L'_H$. 
Thus since $u\leq m-1$, we can also assume that our result holds for $H^*(Z_u,\mathbb Z)$.

To be more precise we can assume that the cohomology ring \(H^*(X_{m-1}, \Z)\) is isomorphic to the polynomial ring  $B[t_1,\ldots ,t_{m-1}]$  modulo the ideal $J_{m-1}$ generated by \begin{enumerate}
\item The products  $t_ic_{r}$, with   \(i\in \{1,...,m-1\}\),  for every  ray $r\in \mathcal R $  that does not belong to \(V_{\Lambda_{G_i}}\).

\item The polynomials \(F(i,A)\), for every pair  $(i,A)$ with   \(i\in \{1,...,m-1\}\) and    \(A\subset \{1,...,m-1\}\) such that if \(j\in A\) then \( G_i\subsetneq G_j\), and  for the pairs \((0,A)\) where  $A$ is such that  \(\bigcap_{j\in A}G_j=\emptyset \).\end{enumerate}
The isomorphism is given by sending, for every \(i=1,...,m-1\), \(t_i\) to the pull back under the projection $\pi_i:X_{m-1}\to X_i=Bl_{\tilde G_i}X_{i-1}$ of the  class of the exceptional divisor in $X_i$.

As far as  $Z_u$ is concerned we need to fix some notation.

Following what we have done for $X$ and $\mathcal G$,
for every pair  $(i,A)$ with   \(i\in \{1,...,u\}\), and    \(A\subset \{1,...,u\}\) such that if \(j\in A\) then \( H_i\subsetneq H_j\),   we  define  the polynomial  $F_Z(i,A)$ in  $H^*(Z,\mathbb Z)[z_1,\ldots ,z_u]$, as follows.

 We   consider the set  \(C_i=\{h\: | \: H_h\subseteq H_i\}\),
and we denote by \(M\) the unique connected component of  \(\bigcap_{j\in A}H_j \)  that contains  $H_i$ (if   $A=\emptyset$ we put  $M=Z$). Then we restrict the polynomials  $P^M_{H_i}$ 
to  $H^*(Z,\mathbb Z)[t]$ and we denote these restrictions  by $P^M_{H_i,Z}$.
We put:
  \[ F_Z(i,A)=P^M_{H_i,Z}(\sum_{h\in C_i}-z_{h})\prod_{j\in A}z_{j}. \]
%If a polynomial \(g\in H^*(Z,\mathbb Z)[t]\) is another choice for \(F_Z(i,A)\) (i.e. it has been obtained starting from a different choice of $P^M_{H_i,Z}$), we write \(g\sim F_Z(i,A)\)

 As before
we  include  the pairs \((0,A)\)  with  
\(\bigcap_{j\in A}H_j=\emptyset \), and we set:  
\[ F_Z(0,A)=\prod_{j\in A}z_{j}.\]

Then, setting   $B'=H^*(Z,\mathbb Z)$, we  can  assume that  cohomology ring \(H^*(Z_u, \Z)\) is isomorphic to the polynomial ring $B'[z_1,\ldots ,z_{u}]$  modulo the ideal $S$ generated by \begin{enumerate}
\item The products  $z_ic_{r}$, with   \(i\in \{1,...,u\}\),  for every  ray $r\in \Delta $  that does not belong to \(V_{\Lambda_{H_i}}\).
\item The polynomials \(F_Z(i,A)\), for every pair  $(i,A)$ with   \(i\in \{1,...,u\}\) and    \(A\subset \{1,...,u\}\) such that if \(j\in A\) then \( H_i\subsetneq H_j\), and  for the pairs \((0,A)\) where  $A$ is such that  \(\bigcap_{j\in A}H_j=\emptyset \).

\end{enumerate}
The isomorphism is given by sending, for every \(i=1,...,u\), \(z_i\) to the pull back under the projection $\pi_i:Z_{u}\to Z_i=Bl_{\tilde H_i}Z_{i-1}$ of the  class of the exceptional divisor in $Z_i$.

Let us now consider the homomorphisms
$$j^*:H^*(X_{m-1}, \Z)\to H^*(Z_{u}, \Z)$$
and 
$$\iota^*:H^*(X, \Z)\to H^*(Z, \Z)$$
induced by  the respective inclusions.
We now remark  that by the discussion in the proof of Proposition  \ref{zetat}, we get that, denoting by 
$[t_j]$ (resp. $[z_i]$) the image of $t_j$ (resp. $z_i$) in  $H^*(X_{m-1}, \Z)$ (resp. $H^*(Z_{u}, \Z)$),
$$ j^*([t_i])=\begin{cases} 0 \ \text{if\ }j\neq s_i\\ [z_i]  \ \text{if\ }j= s_i\end{cases}$$
From this we deduce that $j^*$ is surjective and, if we define 
$$f:B[t_1,\ldots ,t_{m-1}]\to B'[z_1,\ldots ,z_{u}]$$

$$f(a)=\iota^*(a) \ \text {if\ } a\in B$$ $$ f(t_i)=\begin{cases} 0 \ \text{if\ }j\neq s_i\\ z_i  \ \text{if\ }j= s_i\end{cases},$$
we get a commutative diagram:
\[\begin{tikzcd}
B[t_1,\ldots ,t_{m-1}] \arrow{r}{f} \arrow{d}{p} & B'[z_1,\ldots ,z_{u}] \arrow{d}{q} \\
H^*(X_{m-1}, \Z)\arrow{r}{j^*} & H^*(Z_{u}, \Z)
\end{tikzcd}
\]
where $p$ and $q$ are the quotient maps.
At this point we can apply Theorem \ref{teoKeel}. 

We  deduce that
$H^*(Y(X,\mathcal G,\mathbb Z)$ is isomorphic to    $B[t_1,\ldots ,t_m]/L$  where  the ideal
$L=(J_{m-1}, 
t_mker (q\circ f), P_{ Z_u}(-t_m))$.

 In order to proceed, we need an explicit description of the generators for the ideal $ker (q\circ f)$. From the definition of $f$ and our description of the relations for $H^*(Z_u,\mathbb Z)$ we deduce that $ker \ q\circ f$ is generated by 
\begin{enumerate}
\item The elements $c_r$, for every  ray $r\in \Delta$  which does not belong to \(V_{\Lambda_{Z}}\).
\item The elements $t_j$, with $1\leq j\leq m-1$, $j\notin \{s_1,...,s_u\}$.
\item The products  $t_{s_i}c_r$, with   \(i\in \{1,...,u\}\),  for every  ray $r\in \Delta $  that does not belong to \(V_{\Lambda_{H_i}}\).

\item For every $(s_i,A)$ with   \(i\in \{1,...,u\}\) and    \(A\subset \{s_1,...,s_u\}\) such that if \(s_j\in A\) then \( H_i\subsetneq H_j\), the elements  \[\check{F}(s_i, A):=P^M_{H_{i}}(\sum_{h\in B_{s_i}}-t_{h})\prod_{s_j\in A}t_{s_j},\]
where  $M$ is the connected component of $\cap_{s_j\in A}H_j$ that contains \(H_i\), if $A\neq \emptyset$, $G_m$ otherwise. 

Indeed \(f(\check{F}(s_i,A))= F_Z(i,\overline A)\) 
where \(\overline A=\{j\: | \: s_j\in A\}\)  and therefore it belongs to \(ker \ q\).

\item The polynomials \(F(0,A)\) for the pairs \((0,A)\) where  $A\subset \{s_1,...,s_u\}$ is such that  \(\bigcap_{s_j\in A}H_j=\emptyset \).  

\end{enumerate}
Notice that the elements in (1)  and (2) generate $ker f$.

We want to show that \(L\) is equal to the  ideal \(J_m\) generated by the elements described in the statement of the theorem.
Let us first show that \(J_m\subset L\).

The generators of \(J_m\) that do not contain \(t_m\) belong to \(J_{m-1}\) and therefore to \(L\). 

A generator of the form   $t_mc_{r}$, for a  ray $r\in \mathcal R $  that does not belong to \(V_{\Lambda_{Z}}\) clearly lies in $t_mker (q\circ f)$. 

Take a  generator of the form \(F(j,A)\) with $m\in A$ and $j>0$. Set $A'=A\setminus \{m\}$. Then
\[ F(j,A)=t_m(P^M_{G_j}(\sum_{h\in B_j}-t_{h})\prod_{\nu \in A'}t_{\nu}). \]
If there is a $\nu\in A'$ such that $\nu$ is not one of the $ s_i$'s, then 
$$P^M_{G_j}(\sum_{h\in B_j}-t_{h})\prod_{\nu \in A'}t_{\nu}\in ker f$$ and we are done. 

Otherwise, set $\overline{A}=\{i|s_i\in A'\}$.
Notice that since $G_j\subset Z$ necessarily $G_j=G_{s_i}$ for some $1\leq i\leq t$,
and $B_j=\{h|h=s_k, H_k \subseteq H_i\}$. We deduce that 
$$f(P^M_{G_j}(\sum_{h\in B_j}-t_{h})\prod_{\nu \in A'}t_{\nu})= F_Z(i, \overline A)$$
and therefore it belongs to \( ker\, q\). Finally consider $F(0,A)=\prod_{\nu\in A}t_\nu$, with $m\in A$.  
If there is a $\nu\in A'=A\setminus \{m\}$ such that $\nu$ is not one of the $ s_i$'s, then $\prod_{\nu\in A}t_\nu\in ker f$.  Otherwise, set $\overline A=\{i|s_i\in A'\}$. We deduce that 
$$f(F(0,A))=F_Z(0, \overline A)\in ker\, q,\ \ 
\text{since \ }\bigcap_{i\in\overline A}H_i=\bigcap_{\nu\in A'}(G_\nu\cap Z)=\bigcap_{\nu\in A}G_\nu=\emptyset.$$

Finally in order to show that also $F(m,\emptyset)\in L$ we need the following well known
\begin{lemma}\label{remfultonintersection}
Let \(W_1\subset W_3\) and \(W_2\subset W_3\) be   regular imbeddings with normal bundles $N_{ W_1} W_3$ and $N_{ W_2} W_3$ . Set \(\widetilde W_3=BL_{W_1}W_3\) and let \(\widetilde W_2\) denote the dominant transform of $W_2$. 

Then the canonical imbedding $\widetilde W_2\subset \widetilde W_3$ is regular and 
denoting by $\pi$  the projection  from $\widetilde W_2$ to \(W_2\),   
\begin{enumerate}[1.]
\item If \(W_1\subset W_2\)
\[N_{\widetilde W_2}\widetilde W_3\cong \pi^* N_{ W_2} W_3\otimes \mathcal O(-E)\]
where  \(E \) is the exceptional divisor on \(\widetilde W_2\). 
\item If the intersection of $W_1$ and $W_2$ is transversal,
\[ N_{\widetilde W_2}\widetilde W_3\cong \pi^* N_{ W_2} W_3.\]
\end{enumerate}
\end{lemma}\begin{proof} For 1. see \cite{Fultonintersection}, B.6.10. The second part is easy.\end{proof}

By  repeated use of this lemma,   we easily get that 
$$F(m,\emptyset)=P_{ Z}(-\sum_{h\in B_m}t_h)=P_{ Z_u}(-t_m)\in L$$ so that indeed $L\supseteq J_m.$
To finish, we need to see that $L\subseteq J_m.$

We first observe  that   \(J_{m-1}\subset J_m\). Furthermore, we have already seen that 
$P_{ Z_u}(-t_m)=F(m,\emptyset)\in J_m$. It follows that we need to concentrate on the generators of
$ker (q\circ f)$ multiplied by $t_m$.
Following the list given above we consider:
\begin{enumerate}
\item The elements \(t_m c_r\), for every  ray $r\in \Delta$  which does not belong to \(V_{\Lambda_{Z}}\).
These are also generators of  \(J_m\) and there is nothing to prove.
\item  The products $t_mt_j$, with $1\leq j\leq m-1$, and $j$  is not one of the $ s_i$'s. We notice that \(G_j\cap G_m\) is either empty, and therefore $t_mt_j=F(0,\{j,m\})\in J_m\), or each connected component of \(G_j\cap G_m\)  belongs to \(\G\). Let \(G_h\) be one of these components. Then  the generator \(F(h, \{j,m\})\) of \(J_m\) is equal to $t_mt_j$ since \(F(h, \{j,m\})=t_mt_jP_{G_h}^{G_h}\) and \(P_{G_h}^{G_h}=1 \). This finishes the proof that $t_mker f\subset L$.
\item The products $t_mt_{s_i}c_r$, with   \(i\in \{1,...,u\}\),  for every  ray $r\in \Delta $  that does not belong to \(V_{\Lambda_{H_i}}\). There are two possibilities.  If \(H_i=G_{s_i}\) then \(V_{\Lambda_{H_i}}=V_{\Lambda_{G_{s_i}}}\) and $t_{s_i}c_r$ is a generator of \(J_{m-1}\). 

If \(H_i\) is the transversal intersection of \(Z\) and \(G_{s_i}\) then \(V_{\Lambda_{H_i}}=V_{\Lambda_{G_{s_i}}}\cap V_{\Lambda_{Z}}\). Therefore if \(r\) does not belong to \(V_{\Lambda_{H_i}}\) either it does  not belong to \(V_{\Lambda_{Z}}\), and then  \(t_mc_r\) is a generator   of \(J_m\) that  has already been considered in (1), or it does  not belong to \(V_{\Lambda_{G_{s_i}}}\) and $t_{s_i}c_r$ is a generator of \(J_{m-1}\). 

\item  The elements \(t_m \check{F}(s_i,A)\), for every pair  $(s_i,A)$ with   \(i\in \{1,...,u\}\) and    \(A\subset \{s_1,...,s_u\}\) such that if \(s_j\in A\) then \( H_i\subsetneq H_j\). 

If $G_{s_i}\subset G_m$, that is $G_{s_i}=H_i$, then, since $M$ is the connected component of $G_m\cap(\cap_{s_j\in A}G_{s_j})$ containing $H_i$, it is clear that   \[t_m \check{F}(s_i,A)=F(s_i, A\cup \{m\})\in J_m.\]

Otherwise  \(H_i\) does not belong to \(\G\) and it is the transversal intersection of \(G_{s_i}\) and \(G_m\) (see Proposition \ref{lemmaintersezione}), that are   its \(\G\) factors. 
If $A=\emptyset$, we observe that \(P_{G_{s_i}}^X\) is  a valid choice for \(P_{H_i}^Z\)  so 
\(\check{F}(s_i,\emptyset)\sim F(s_i,\emptyset)\). 

Therefore $\check{F}(s_i,\emptyset) \in J_{m-1}\) and $t_m\check{F}(s_i,\emptyset)\in J_m$.

Assume now $A\neq \emptyset$. We claim that, denoting by $M'$   the connected component of the intersection
 $\cap_{s_j\in A}G_{s_j}$ containing  $G_{s_i}$, $M$ is the transversal intersection of $M'$ and $G_m$.
 
Take any $t$ such that \( H_i\subseteq H_t\). 
Then if $G_{s_t}=H_t\subset G_m$, since $G_m$ is a $\G$ factor of
$H_i$ this would imply $G_{s_t}=G_m$ a contradiction. 

  We deduce that $G_{s_j}\nsubseteq G_m$ for all $s_j\in A$,
and furthermore $M\notin \G$, since  otherwise  $G_m=M\subset  G_{s_j}$.

A $\G$ factor of $M'$ is contained in at least one of the $G_{s_j}$, $s_j\in A$. In particular none of these $\G$ factors contains $G_m$. 
 Furthermore since  $G_m$ is a $\G$ factor of  $H_i$ it is also a $\G$ factor of $M$.

It follows that we can apply  Corollary \ref{cortransversale}.2 and we conclude that $M$ is  the transversal intersection of $M'$ and $ G_m$ as desired. 
Thus, reasoning as above,  
we observe that \(P_{G_{s_i}}^{M'}\) is  a valid choice for \(P_{H_i}^M\)  so 
\(\check{F}(s_i, A)\sim F(s_i,A)\).
Therefore $\check{F}(s_i,A) \in J_{m-1}\) and $t_m\check{F}(s_i,A)\in J_m$.

 \item The products $t_mF(0,A)=t_m(\prod_{s_i\in A}t_{s_i})$ for  \(A\subset \{s_1,...,s_u\}\) such that  $\cap_{s_i\in A}H_i=\emptyset$. In this case
 $$G_m\cap (\bigcap _{i\in A}G_{s_i})=\bigcap_{i\in A}H_i=\emptyset$$
 so that $t_m\prod_{i\in A}t_{s_i}=F(0, A\cup\{m\})\in J_m$.
\end{enumerate}

Putting everything together we have shown that $L\subset J_m$ so that $L=J_m$ and our claim is proved.
\end{proof}
\section{A way to choose the polynomials \(P_G^M\)}
\label{subsecsceltapolinomi}

Let us use the same notations (\(\A,\Delta, X=X_\Delta,...\)) as in the preceding sections.
%Section \ref{Chern1}.
We want to show an explicit choice of the polynomials $P_G^M\in H^*(X,\Z)[t]=B[t]$, and therefore of the polynomials \(F(i,A)\) that appear in Theorem \ref{teopresentazionecoomologia}

Let us  consider two elements \(G,M\in \mathcal L'\) with   $G\subset M$. 
%We recall that  $\Delta$ has been chosen in such a way that  both $\Lambda_G$  and \(\Lambda_M\) have an equal sign basis. 
We can choose  a basis \(B_{\Lambda_G}=\{\beta_1,...,\beta_s\}\)   of   $\Lambda_G$ such that the first \(k\) elements (\(k<s\))  are a basis of  \(\Lambda_M\).

%Let us consider a subtorus \(K_{v,b}\) in the arrangement \(\A\): we will compute some relations for the class of the line bundle of the divisor \({\overline K_{v,b}}\) in the Picard group \(Pic(K_{\Delta(\A)})\) and in \(H^*(K_{\Delta(\A)},\Z)\).

%Let us denote by \(\langle  \ , \ \rangle \) the pairing between \(X^*(T)\) and its dual lattice \(hom_\Z(X^*(T), \Z)\). 
We recall that the irreducible divisors in the boundary of \(X\) are in correspondence with the rays   of the fan \(\Delta\). 

In particular, let us consider a maximal cone \(\sigma\) in \(\Delta\), whose one dimensional faces are  generated by the  rays  \(r_1,...,r_n\) (a basis of the lattice  \(hom_\Z(X^*(T), \Z)\)),   and let us denote as usual their corresponding divisors by  \(D_{r_1},...,D_{r_n}\).

The subvariety  \(G= {\overline {\mathcal K}}_{\Lambda_G, \phi}\) of \(X\) has  the following local defining equations in the chart associated to \(\sigma\):
\[  \left\{
\begin{array}{c}
  z_1^{\langle \beta_1,r_1\rangle }\cdots z_n^{\langle \beta_1,r_n\rangle }=\phi(\beta_1)  \\
  z_1^{\langle \beta_2,r_1\rangle }\cdots z_n^{\langle \beta_2,r_n\rangle }=\phi(\beta_2)   \\
  ...\\
  ...\\
  z_1^{\langle \beta_s,r_1\rangle }\cdots z_n^{\langle \beta_s,r_n\rangle }=\phi(\beta_s)
\end{array}
 \right.\]
%where the exponents are all nonnegative or all nonpositive. 

Therefore the subvariety  \( G\) is described as the intersection of \(s\) divisors. The divisor \(D(\beta_j)\) corresponding to \(\beta_j\) has  a local function  with  poles of order  \(- min (0, \langle \beta_j ,r_i\rangle)  \) along the divisor \(D_{r_i}\), for every \(i=1,..., n\).
This implies that in \(Pic(X)\) we have the following relation:
\begin{equation}\label{relazione}
[D(\beta_j)]+\sum_{r}min(0, \langle \beta_j,r\rangle )[D_r]=0
\end{equation}
where \(r\) varies in the set \(\mathcal R\) of  all the rays  of \(\Delta\).

Therefore the   polynomial  in \(H^*(X,\Z)[t]=B[t]\)  \[P_G^X=\prod_{j=1}^s(t-\sum_{r\in \mathcal R}min(0,\langle \beta_j, r\rangle )c_r)\]  where  \(c_r\) is the class of the divisor  \(D_r\), is a good  lifting  of the Chern polynomial of  $N_{ G}X$.

At the same way, the   polynomial    in \(B[t]\) \[P_M^X=\prod_{j=1}^k(t-\sum_{r\in \mathcal R}min(0,\langle \beta_j, r\rangle )c_r)\] is a good  lifting  of the Chern polynomial of  $N_{M}X$. 
This implies that the polynomial \[\frac{P^X_G}{P^X_M}=\prod_{j=k+1}^s(t-\sum_{r\in \mathcal R}min(0,\langle \beta_j, r\rangle )c_r) \]
restricted to \(H^*( M, \Z)[t]\) is   a good lifting  of the Chern polynomial of   $N_{ G}( M)$, i.e. it is a choice for the polynomial \(P_G^M\)  as requested in   Section \ref{Chern1}. 

\section{The cohomology of the strata}
\label{seccohomologystrata}
Let us consider, with  the same notation as before (\(\A,\Delta, \mathcal R, X=X_\Delta,\elle,\elle'\)), a well connected building set \(\G=\{G_1,...,G_m\}\) for \(\elle'\). As we know from Section \ref{pipo}, the models \(Y(X,\G)\) and \(Y(X,\G^+)\) are isomorphic. As in Proposition \ref{ilg+}, we set  $\mathcal G^+=\G \cup \{D_r\}_{r\in \mathcal R}\), and  for any \(G\in \G^+\) we denote by \(D_G\)  its   corresponding divisor in  \(Y=Y(X,\G^+)\).

In this section we are going to generalize  our main result and explain how to compute the cohomology ring for any variety
$Y_{\mathcal S}=\bigcap_{G\in \mathcal S}D_G$ for any subset $\mathcal S\in\mathcal G^+$. Notice that if $\mathcal S$ is not ($\mathcal G^+$)-nested, $Y_{\mathcal S}=\emptyset$, so that we are going to assume that $\mathcal S$  is nested.

We set $\mathcal T_{\mathcal S}=\mathcal S\cap \mathcal G$ and
$\mathcal D_{\mathcal S}=\mathcal S\cap  \{D_r\}_{r\in \mathcal R}\), so that
\(\mathcal S$ is the disjoint union of $\mathcal T_{\mathcal S}$ and $\mathcal D_{\mathcal S}$. Remark that, since $\mathcal S$ is nested, the rays $\mathcal R_{\mathcal S}=\{r|D_r\in \mathcal D_{\mathcal S}\}$ span a cone in the fan $\Delta$.

Fix a pair  $(i,A)$ with   \(i\in \{1,...,m\}\), and    \(A\subset \{1,...,m\}\) such that if \(j\in A\) then \( G_i\subsetneq G_j\). Set $\mathcal S_i=\{h|G_h\in \mathcal S\ \text{and}\ G_h\supsetneq G_i\}$ and  consider the set  \(B_i=\{h\: | \: G_h\subseteq G_i\}\).
Denote by \(M=M_{\mathcal S}\) the unique connected component of  \(\bigcap_{j\in A\cup \mathcal S_i}G_j \)  that contains  $G_i$ (if   $A\cup \mathcal S_i=\emptyset$ we put  $M=X$). Then, after choosing   all the polynomials $P^M_{G_i}$ as explained in the previous sections, we set:
  \[ F_{\mathcal S}(i,A)=P^M_{G_i}(\sum_{h\in B_i}-t_{h})\prod_{j\in A}t_{j} \]
  We also set \(F_{\mathcal S}(0, A)=F(0, A)\).
  We have
\begin{teo}\label{teopresentazionecoomologiastrati} For any nested set $\mathcal S\subset \mathcal G^+$, the cohomology ring $H^*(Y_{\mathcal S},\mathbb Z)$ is isomorphic to  the polynomial ring  $B[t_1,\ldots ,t_m]$  modulo the ideal $J_m(\mathcal S)$ generated by the following elements:
\begin{enumerate}[1.]
\item The classes $c_r\in B$ for any ray $r$ such that $\{r\}\cup R_{\mathcal S}$ 
does not span a cone in the fan $\Delta$.
\item The products  $t_ic_{r}$, with   \(i\in \{1,...,m\}\),  for every  ray $r\in \mathcal R $  that does not belong to \(V_{\Lambda_{G_i}}\).

\item The polynomials \(F_{\mathcal S}(i,A)\), for every pair  $(i,A)$ with   \(i\in \{1,...,m\}\) and    \(A\subset \{1,...,m\}\) such that if \(j\in A\) then \( G_i\subsetneq G_j\), and  for the pairs \((0,A)\) where  $A$ is such that  \[\left( \bigcap_{j\in A}G_j\right )\cap \left( \bigcap_{H\in {\mathcal S}}H \right)=\emptyset \]
\end{enumerate}
The image in $H^*(Y_{\mathcal S},\mathbb Z)$ of the classes $c_r$ and $t_j$ is just the restriction of the corresponding classes in $H^*(Y(X, \mathcal G^+),\mathbb Z).$
\end{teo}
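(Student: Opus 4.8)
The plan is to imitate the proof of Theorem \ref{teopresentazionecoomologia}, but using the iterated blow-up description of $Y_{\mathcal S}$ induced by the construction of $Y = Y(X,\mathcal G^+)$. The key observation is that $Y_{\mathcal S}$ is itself obtained as an iterated blow-up: first, the closures of layers and toric divisors indexed by $\mathcal S$ have a common intersection which is a single stratum of $X$; blowing up the $\mathcal G$-factors successively and taking the corresponding dominant transforms, one realizes $Y_{\mathcal S} = D_{G_1}\cap\cdots$ as a wonderful-type model over the stratum $X_{\mathcal S} := \overline{\mathcal K}_{\Gamma,\phi}\cap\bigcap_{r\in \mathcal R_{\mathcal S}} D_r$ (where the layer is the intersection of those $G\in \mathcal T_{\mathcal S}$), for the induced arrangement. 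Concretely, I would reorder $\mathcal G^+$ so that the elements of $\mathcal S$ come first (refining inclusion among the elements of $\mathcal T_{\mathcal S}$), and then track at each blow-up stage what happens to the intersection of the divisors $D_G$, $G\in\mathcal S$; at the end of the $|\mathcal S|$ steps this intersection is smooth, transversal (by Theorem \ref{teoremabordoLi}), and isomorphic to a product-type model over $X_{\mathcal S}$ for the restriction of the remaining building-set elements.

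First I would set up the induction on $m = |\mathcal G|$ exactly as in Theorem \ref{teopresentazionecoomologia}, peeling off $G_m = Z$. There are two cases: either $G_m\in\mathcal S$ (so $D_{G_m}$ appears in the intersection defining $Y_{\mathcal S}$) or $G_m\notin\mathcal S$ (so we must genuinely blow up inside $Y_{\mathcal S'}$, where $\mathcal S' = \mathcal S$ viewed inside $Y(X,\mathcal G^+_{m-1})$). In the first case, $Y_{\mathcal S}$ sits inside $X_{m-1}$ as (the proper transform of) $Z$ intersected with the divisors indexed by $\mathcal S\setminus\{G_m\}$; by Proposition \ref{zetat} and the analysis of the induced building set $\mathcal H$ on $Z$ (Propositions \ref{lemmarrangiamentoindotto}, \ref{zetat} and Remark \ref{remarkarrangiamentoindotto}), this is a stratum $Z_{\mathcal S''}$ of the wonderful model $Y(Z,\mathcal H^+_u)$, which by the inductive hypothesis (applied to the torus $H$, its good toric variety $Z$, and the well-connected building set $\mathcal H$) has the asserted presentation. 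In the second case, $Y_{\mathcal S}$ is the blow-up of $Y_{\mathcal S'}$ along the dominant transform of $Z$ (intersected with the relevant divisors), the restriction $H^*(Y_{\mathcal S'})\to H^*(\text{center})$ is surjective by the inductive hypothesis together with Proposition \ref{sopra}, and Keel's Theorem \ref{teoKeel} applies; one then repeats the bookkeeping of Theorem \ref{teopresentazionecoomologia} to match the ideal $L$ produced by Keel with $J_m(\mathcal S)$.

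The extra input compared with Theorem \ref{teopresentazionecoomologia} is entirely about the toric/divisorial part: the generators $c_r$ for rays $r$ with $\{r\}\cup \mathcal R_{\mathcal S}$ not spanning a cone, and the modified connected component $M_{\mathcal S}$ (which now involves the extra indices in $\mathcal S_i$, i.e.\ the elements of $\mathcal T_{\mathcal S}$ above $G_i$). For the first point I would argue directly at the toric level: $Y_{\mathcal S}$ maps to $X_{\mathcal S} := \bigcap_{D_r\in\mathcal D_{\mathcal S}} D_r\,\cap\,(\text{layer})$, the pullback $c_r\mapsto$ its restriction factors through $H^*$ of the toric variety $\overline{\mathcal O}$ cut out by $\mathcal R_{\mathcal S}$, and by Proposition \ref{danilov} applied to that toric variety, $c_r$ restricts to zero precisely when $\{r\}\cup\mathcal R_{\mathcal S}$ is not a cone; one also needs that the other relations $t_i c_r$ with $r\notin V_{\Lambda_{G_i}}$ and the $t$-relations among rays in $\mathcal R_{\mathcal S}$ are already consequences, which follows from the $\overline{\mathcal O}$-presentation and the fact that $F_{\mathcal S}(i,A)$ already incorporates the component $M_{\mathcal S}$. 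For the second point, the appearance of $\mathcal S_i$ in $M_{\mathcal S}$ comes out automatically in the inductive step: when $G_m\in\mathcal S$ and $G_i\subset G_m$, the relevant normal-bundle computation via Lemma \ref{remfultonintersection}.1 introduces the extra $\mathcal O(-E)$ twist exactly corresponding to enlarging the component $M$ by intersecting with $G_m$, so $\check F(s_i,A)$ for the stratum matches $F_{\mathcal S}(i, A\cup\{m\})$ up to the $\sim$-equivalence of Proposition \ref{propindependent}.

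\textbf{Main obstacle.} The hard part will be the careful identification, at each inductive step, of the stratum $Y_{\mathcal S}$ with a wonderful-type model over a smaller good toric variety, and the verification that the relations coming out of Keel's lemma (computed with respect to the old indexing) reassemble precisely into the list with the modified component $M_{\mathcal S}$ — in particular keeping track of how the constraint ``$\{r\}\cup\mathcal R_{\mathcal S}$ spans a cone'' interacts with the relations $t_i c_r$, and checking that no spurious relations survive. This is the same kind of bookkeeping as in Theorem \ref{teopresentazionecoomologia}, only with the divisors $D_r$, $r\in\mathcal R_{\mathcal S}$, carried along; the well-connectedness of $\mathcal G$ and Theorem \ref{teoremabordoLi} (transversality of the boundary divisors) are what make the intersections behave, but the combinatorial case analysis is delicate and is where the real work lies.
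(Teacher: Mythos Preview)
Your inductive scaffolding (peel off $G_m$, split on whether $G_m\in\mathcal S$) is the same as the paper's, and the case $G_m\notin\mathcal S$ is handled essentially as the paper does. You should, however, separate that case into the two sub-cases $\mathcal S\cup\{G_m\}$ nested versus not nested: in the non-nested sub-case the center of the blow-up is empty, $Y_{\mathcal S}=Y'_{\mathcal S}$, and the new relation $t_m$ appears as $F_{\mathcal S}(0,\{m\})$; only in the nested sub-case do you actually blow up and invoke Keel.

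The genuine gap is in your treatment of the case $G_m\in\mathcal S$. Your claim that ``$Y_{\mathcal S}$ sits inside $X_{m-1}$ as (the proper transform of) $Z$ intersected with the divisors indexed by $\mathcal S\setminus\{G_m\}$'' is false: $Y_{\mathcal S}$ is contained in the exceptional divisor $D_{G_m}\subset X_m$, which is a $\mathbb P^{c-1}$-bundle over $\tilde Z=Z_u$ (with $c=\operatorname{codim}_X Z$), and the restriction of the blow-down $X_m\to X_{m-1}$ to $D_{G_m}$ is not an isomorphism. Concretely, setting $\mathcal S'=\mathcal S\setminus\{G_m\}$, one has
\[
Y_{\mathcal S}\;=\;D_{G_m}\cap\bigcap_{G\in\mathcal S'}D_G\;=\;\mathbb P\bigl(N_{\tilde Z}X_{m-1}\big|_{\tilde Z\cap Y'_{\mathcal S'}}\bigr),
\]
a projective bundle over $\tilde Z\cap Y'_{\mathcal S'}$, \emph{not} a stratum of $Y(Z,\mathcal H^+)$. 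A dimension count already shows the discrepancy. In particular the class $t_m$ does not die on $Y_{\mathcal S}$: it restricts to the tautological class $\mathcal O(-1)$ on this bundle, and the relation $F_{\mathcal S}(m,\emptyset)$ is precisely the projective-bundle relation for this class. Your proposed identification with a stratum of the model on $Z$ loses the variable $t_m$ entirely and therefore cannot recover the stated presentation.

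The fix is to follow the paper more closely: observe that $Y_{\mathcal S'}=Bl_{\,\tilde Z\cap Y'_{\mathcal S'}}(Y'_{\mathcal S'})$ (this is the nested sub-case of $G_m\notin\mathcal S$, already handled), and then realize $Y_{\mathcal S}$ as the exceptional divisor of this blow-up inside $Y_{\mathcal S'}$; the restriction map $H^*(Y_{\mathcal S'})\to H^*(Y_{\mathcal S})$ is then surjective and its kernel is generated by the extra relations coming from the $\mathcal S_i$-enlargement of $M$, exactly as you describe in your ``second point''. (The paper's one-line description of this case is itself slightly garbled, but this is what it must mean.)
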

\begin{proof} As in the proof of Theorem \ref{teopresentazionecoomologia} we proceed by induction on $m$.
The case $m=0$ follows from the well know computation of the cohomology of stable subvarieties in a complete smooth toric variety (\cite{fultontoric}). 
So we take $\mathcal G_{m-1}^+=\mathcal G^+\setminus G_m$ which, by Theorem \ref{ilg+}, is a building set.
Furthermore  we remark that the nested sets in $\mathcal G_{m-1}^+$ coincide with the nested sets in $\mathcal G^+$ not containing $G_m$. 

We set for any $G\in \mathcal G^+_{m-1}$, $D'_G$ equal to the divisor corresponding to $G$ in $Y'=Y(\mathcal G_{m-1},X)$ and for $\mathcal S$ nested in $\mathcal G^+_{m-1}$, $Y'_{\mathcal S}=\cap_{G\in \mathcal S}D'_G$.

Let us take a nested set $\mathcal S$ for $\mathcal G^+$
and, as usual,  put \(G_m=Z\). 

Assume $G_m\notin \mathcal S$. 
If $\mathcal S\cup \{G_m\}$ is not nested, then $Y'_{\mathcal S}\cap \tilde Z=\emptyset$, so that 
$Y'_{\mathcal S}=Y_{\mathcal S}$. In particular $t_m$ is in the kernel of the restriction map $H^*(Y(\mathcal G, X))\to H^*(Y_{\mathcal S}) $.
Now $t_m=F(0, \{m\})$ is one of our relations and all the other relations different from $F_{\mathcal S}(m,\emptyset)$ either are divisible by $t_m$ or they already appear among the relations for $H^*(Y'_{\mathcal S})$. As for $F_{\mathcal S}(m,\emptyset)$,  this coincides with  $F_{}(m,\emptyset)$, therefore  it is already equal to $0$ in $H^*(Y(X,\mathcal G))$ so  there is nothing to prove.

If $\mathcal S\cup \{G_m\}$ is  nested then the intersection $N=Y'_{\mathcal S}\cap \tilde Z$ is transversal, so that $Y_{\mathcal S}$ is the blow up of $Y'_{\mathcal S}$ along $N$. Now $N$ is just
the transversal intersection of the divisors $D'_{G_i}\cap \tilde Z$ in $\tilde Z$ then again we can use our inductive hypothesis  exactly as in the proof of Theorem \ref{teopresentazionecoomologia}.

Finally if $G_m\in \mathcal S$, setting $\mathcal S'=\mathcal S\setminus \{G_m\}$, we deduce that $Y_{\mathcal S}$ is  the blow up of $Y'_{\mathcal S'}$ along the (necessarily transversal) intersection $Y'_{\mathcal S'}\cap \tilde Z$. Thus again everything follows from our inductive assumption and the nature of the relations.

\begin{rem}
We notice that the arguments used in the proof above and in the proof of Theorem \ref{teopresentazionecoomologia} can be applied almost verbatim to the case of projective wonderful models of a subspace arrangement in \(\pp (\C^n)\). 
Everything in this case is simpler: all the building sets are well connected,  the polynomials \(P_G^M(t)\) are powers of \(t\) and the initial projective variety is \(\pp (\C^n)\). One finally gets, with a shorter proof,  the same presentation by generators and relations of  Theorem 5.2 in \cite{DCP1}.

\end{rem}

\end{proof}
\addcontentsline{toc}{section}{References}
%\nocite{*}
\bibliographystyle{acm}
\bibliography{Bibliogpre} 
\end{document}